\documentclass[11pt,a4paper]{article}
\usepackage{amsmath,amssymb,amsthm,mathtools}
\usepackage{enumitem}
\usepackage{hyperref}
\usepackage{lineno}

\usepackage{setspace}
\newtheorem{theorem}{Theorem}[section]
\newtheorem{proposition}[theorem]{Proposition}

\newtheorem{corollary}[theorem]{Corollary}

\newtheorem{definition}[theorem]{Definition}
\newtheorem{remark}[theorem]{Remark}
\newtheorem{example}[theorem]{Example}
\newtheorem{assumption}[theorem]{Assumption}

\newcommand{\E}{\mathrm{E}}
\newcommand{\Var}{\mathrm{Var}}
\newcommand{\R}{\mathbb{R}}

\newcommand{\ie}{{i.e.}, }
\newcommand{\eg}{{e.g.}, }

\newcommand{\Feight} {\fontsize{8}{11}\selectfont  }
\newcommand{\Fnine} {\fontsize{9}{11}\selectfont  }
\newcommand{\Ften} {\fontsize{10}{11}\selectfont  }

\usepackage{fancyhdr}
\title{Inference Functionals and Observation Operators \\
for Distributional Statistical Models}
\author{
R. Labouriau\\ [0.3em]
\small Department of Mathematics, Aarhus University
\footnote{e-mail:
\Fnine \texttt{rodrigo.labouriau@math.au.dk} and 
\Fnine \texttt{rodrigo.labouriau@rlstatlab.com}}}
\date{Spring 2026}

\begin{document}
\maketitle

\begin{abstract}\Feight
This paper generalises the classical notion of inference function
(Godambe, 1960) to distributional statistical models---models in which
each probability measure is represented by a distribution--kernel pair
$(T_\theta, \varphi) \in \mathcal S'(\mathbb R) \times
\mathcal S(\mathbb R)$.  This generalisation is strategically motivated:
the most useful properties of maximum likelihood estimation---consistency
and asymptotic normality---derive not from the maximisation of the
likelihood per se, but from the fact that the MLE is the root of a regular
inference function (the score equation).  Extending inference functions to
the distributional setting therefore provides access to an optimality
theory for models that lack classical densities or finite moments.

The extension requires enlarging the notion of observation.  We introduce
\emph{observation operators}
$\mathcal O : \mathcal S'(\mathbb R) \to \mathcal Y$ that map distributional
models to an observation space, and define \emph{inference functionals} as
estimating equations composed with these operators.  The resulting framework
encompasses classical point observations, interval-censored data,
convolutional measurements, and transform-based statistics as special cases.
We establish the asymptotic theory (consistency, asymptotic normality,
Godambe optimality) under mild conditions and derive a hierarchy of
information bounds---classical Fisher information dominates the
information available through the observation operator, which in turn
dominates the information captured by any inference functional---via the
H\'ajek--Le~Cam convolution theorem.  The two gaps in the hierarchy
quantify distinct sources of information loss: the observation mechanism
and the choice of inference functional, respectively.
Concrete examples include sinusoidal
inference functions for heavy-tailed distributions, interval-censored
location inference, elliptically contoured models, and the treatment of
nuisance parameters via the Bhapkar--Godambe projection.
 \end{abstract}

\smallskip
\noindent
\textbf{\Fnine Keywords:} \Feight Inference functions, observation operators, distributional statistical models, Godambe information,  Hájek–Le Cam convolution theorem.

\smallskip
\noindent
\textbf{MSC 2020:} 62F12, 62B15,62F10, 46F10, 62F35.

\normalsize

\newpage
\tableofcontents
\newpage

\section{Introduction}\label{sec:introduction}
The aim of this paper is to extend the theory of inference functions
to statistical models defined through distributional representations,
thereby placing estimating-function methodology on a foundation that
does not require classical densities or finite moments.

In classical statistics, the theory of inference functions, initiated by
Godambe (1960)~\cite{Godambe1960}, provides a unifying framework for the
construction and comparison of estimating equations. An inference function is a function
$\psi(x, \theta)$ of the data $x$ and the parameter $\theta$, and the
estimator $\hat\theta$ is defined as the root of the estimating equation
$\sum_{i=1}^n \psi(X_i, \theta) = 0$.  The maximum likelihood estimator
is a special case: it is the root of the score equation
$\sum_i \partial_\theta \log f_\theta(X_i) = 0$.  The key insight of
Godambe's programme is that the most attractive properties of maximum
likelihood---consistency and asymptotic normality---arise not from
the maximisation of the likelihood function but from the structure of the
associated score equation.  Specifically, the score is a regular inference
function satisfying an unbiasedness condition
($\E_\theta[\psi(X, \theta)] = 0$), a sensitivity condition
(non-degeneracy of $\E_\theta[\partial_\theta \psi]$), and a variability
condition (finiteness of $\E_\theta[\psi^2]$), and it is these structural
properties---not the global optimisation---that drive the asymptotic
theory.

This has a strategic consequence: for models where the classical
likelihood is unavailable---whether because the model is
distributional, the observations are not point evaluations, or the
moments needed for standard estimating equations do not exist---the
natural path is to generalise the inference function itself, rather
than to seek a generalised likelihood.

Distributional statistical models, developed in the companion
paper~\cite{LabouriauA1}, encode each probability measure $P_\theta$ as a
pair $(T_\theta, \varphi)$, where $T_\theta \in \mathcal S'(\R)$ is a
tempered distribution and $\varphi \in \mathcal S(\R)$ is a fixed Schwartz
kernel shared across the parametric family.  The pairing
$\langle T_\theta, \psi\varphi \rangle$ replaces the classical expectation
$\E_\theta[\psi(X)]$ and is well defined for all functions $\psi$ such
that $\psi\varphi \in \mathcal S(\R)$---in particular, for all
polynomials, so that moments of all orders exist unconditionally, even for
distributions (such as the Cauchy or stable laws) whose classical moments
are undefined.

To generalise inference functions to this setting, one must first confront
a conceptual question: what, precisely, does it mean to ``observe'' a
distributional model?  In classical statistics, an observation is a point
evaluation: the random variable $X_i$ takes the value $x_i$, and the
inference function $\psi(x_i, \theta)$ is evaluated at that point.  This
presupposes that the underlying quantity has a well-defined value at each
point.  But as Strichartz (1994, Chapter~1) observes in the context of
distribution theory, this assumption is often physically unrealistic.  A
thermometer, for instance, does not measure the temperature $f(x)$ at a
single point~$x$; rather, the bulb of the thermometer has a nonzero
spatial extent, and what is actually measured is a weighted average
$\int f(y)\,\varphi(y)\,dy$, where $\varphi$ describes the response
profile of the instrument.  The measured quantity is not a point evaluation
of~$f$ but the action of a linear functional on~$f$.

This everyday observation---that physical measurements are
operators acting on the underlying quantity, not point evaluations---has
deep consequences when combined with the programme of generalising
inference functions.  If the model is distributional and the observations
are operators, then an inference function must be a functional of the
operator output, not of a point value.  This leads to the central concept
of the present paper: an \emph{observation operator}
$\mathcal O : \mathcal S'(\R) \to \mathcal Y$ that maps the distributional
model to an observation space~$\mathcal Y$, and an \emph{inference
functional} $\Psi(Y, \theta)$ defined on the observed data
$Y = \mathcal O(T_X)$.  We use the term \emph{inference functional}
rather than \emph{inference function} to reflect that $\Psi$ acts on
distributional objects; when the observation operator reduces to point
evaluation, inference functionals reduce to classical inference functions
in the sense of Godambe.

The observation-operator formulation reveals a natural hierarchy of
three conceptual layers.  At the most general level
(\emph{Layer~I}), observations are genuinely distributional: they are the
output of linear operators acting on $\mathcal S'(\R)$---interval
functionals, convolutional measurements, spectral
transforms---and the inference functional acts on operator-valued data.
At an intermediate level (\emph{Layer~II}), observations are classical
point values, but the inference functions are constructed from
distributional identities---weak characteristic functions, oscillatory
equations, weak moments---that do not require densities or finite classical
moments; distributional constructions enter at the inferential level, not
at the observational level.  At the most classical level
(\emph{Layer~III}), the score equation
$\sum_i \psi(X_i, \theta) = 0$ is recovered as the special case in which
the observation operator is point evaluation and the kernel weight is
absorbed into the inference function.  The paper develops these layers in
the order I~$\to$~II~$\to$~III: the broadest conceptual framework is
presented first, then specialised to transform-based methods, and finally
connected to classical estimating-function theory.

Beyond the conceptual unification, the operator-theoretic viewpoint
yields a concrete structural result.  The observation operator
$\mathcal O$ induces a \emph{pushforward model}
$\{P_\theta^{\mathcal O}\}$ on the observation space~$\mathcal Y$, and
under standard regularity conditions this family is locally
asymptotically normal (LAN).  The H\'ajek--Le~Cam convolution theorem then
guarantees that every regular estimator $\hat\theta_n$ based on the
pushforward data satisfies
\[
\sqrt{n}(\hat\theta_n - \theta)
\;\xrightarrow{\;\mathcal D\;}
N\bigl(0,\, I_{\mathcal O}(\theta)^{-1}\bigr) * M,
\]
where $I_{\mathcal O}(\theta)$ is the Fisher information of the pushforward
model and $M$ is a non-negative-definite noise distribution.  The Godambe
information $G_\Psi$ of any inference functional~$\Psi$ satisfies
$G_\Psi \leq I_{\mathcal O}$, and this inequality is a \emph{corollary}
of the convolution theorem, not a subjective choice of optimality
criterion.  The resulting hierarchy
\begin{equation}\label{eq:info-hierarchy-intro}
I_{\mathrm{classical}}(\theta)
\;\geq\; I_{\mathcal O}(\theta)
\;\geq\; G_\Psi(\theta)
\end{equation}
decomposes information loss into two distinct and interpretable sources:
the gap $I_{\mathrm{classical}} - I_{\mathcal O}$ measures information lost
through the observation mechanism, while $I_{\mathcal O} - G_\Psi$
measures information lost through the choice of inference functional.
This resolves a long-standing tension in the theory of inference
functions.  Godambe (1960) proposed maximising $G_\Psi$ as an optimality
criterion and argued that this avoids asymptotic justifications; as
discussed in J{\o}rgensen and Labouriau (2012, Chapter~4), this claim is
not entirely accurate, since the criterion ultimately requires an
evaluative judgment.  The convolution theorem provides the missing
structural justification: maximising $G_\Psi$ is equivalent to minimising
the non-Gaussian noise~$M$ in the limit distribution.

The present paper is part of a series on distributional statistical
models.  The companion paper~\cite{LabouriauA1} develops the
probabilistic foundation---distribution--kernel pairs, weak moments and
cumulants, determinacy theory, and a distributional central limit
theorem.  The second companion~\cite{LabouriauA2} studies weak moment
estimation as a concrete inferential methodology, with applications to
robust estimation and generalised method-of-moments.  The present paper
develops a general operator-theoretic framework for statistical inference
under distributional observations; its intellectual
centre---observation operators, inference functionals, pushforward
experiments, and the information hierarchy---is distinct from the
methodological focus of~\cite{LabouriauA2}, though both papers share the
distributional foundation of~\cite{LabouriauA1} and have overlapping
examples.

The main contributions are the following.  First, we formulate
statistical inference in terms of observation operators acting on
distributional representations, with a precise definition of measurability
via the cylindrical $\sigma$-algebra on $\mathcal S'(\R)$.  Second, we
introduce the taxonomy of estimating equations
(Layers~I--III) unified by the distributional framework.  Third, we
develop the asymptotic theory for regular inference functionals, including
consistency under a Glivenko--Cantelli condition, asymptotic normality,
and Godambe optimality.  Fourth, we derive the information hierarchy
\eqref{eq:info-hierarchy-intro} via the convolution theorem, providing a
structural resolution of the Godambe criterion.  Fifth, we present
concrete examples: sinusoidal inference functions for Student $t$ and
Cauchy distributions, interval-censored location inference, multivariate
elliptically contoured models, and a simulation study.  Sixth, we extend
the framework to nuisance parameters via the Bhapkar--Godambe projection,
establishing automatic orthogonality of sinusoidal inference functions and
introducing weak versions of inferential separation
(S-nonformation, I-nonformation, L-nonformation).

The paper is organised as follows.
Section~\ref{sec:observations} introduces distributional representations,
observation operators (distinguishing deterministic linear, nonlinear, and
stochastic operators), and the pushforward model.
Section~\ref{sec:inference-functionals} defines inference functionals
and the taxonomy of estimating equations.
Section~\ref{sec:asymptotics} develops the asymptotic theory.
Section~\ref{sec:convolution} establishes the convolution theorem,
a regularity bridge between inference functionals and Le~Cam theory,
and the information hierarchy.
Section~\ref{sec:examples} presents examples and a simulation study.
Section~\ref{sec:multivariate} extends the framework to the multivariate
setting.
Section~\ref{sec:nuisance} develops the theory of nuisance parameters
and inferential separation.
Section~\ref{sec:discussion} concludes with a discussion and outlook.

Following the companion paper~\cite{LabouriauA1}, weak objects carry a
superscript $\varphi$ denoting the kernel:
${}^{(\varphi)}m_n$ for weak moments,
${}^{(\varphi)}\phi(t)$ for the weak characteristic function,
${}^{(\varphi)}K(t)$ for the weak cumulant generating function, and
${}^{(\varphi)}\kappa_n$ for weak cumulants.  Classical objects carry no
superscript.  We write $\mathcal S = \mathcal S(\R)$ for the Schwartz space,
$\mathcal S' = \mathcal S'(\R)$ for its dual, and
$\langle T, f\rangle$ for the distributional pairing.

\section{Distributional observations and observation operators}\label{sec:observations}

\subsection{Distribution--kernel pairs and weak expectation}\label{subsec:dk-pairs}

We recall the essential definitions from the companion
paper~\cite{LabouriauA1}.  A \emph{distribution--kernel pair} is a tuple
$(T, \varphi)$ with $T \in \mathcal S'(\R)$ and
$\varphi \in \mathcal S(\R)$.  The distribution $T$ encodes a generalised
density; the kernel $\varphi$ regularises the pairing.

Given $(T, \varphi)$, the \emph{class of admissible functions} is
\[
\mathcal A_\varphi
:= \{\psi : \psi\varphi \in \mathcal S(\R)\}.
\]
All polynomials and all exponentials $x \mapsto e^{itx}$ belong to
$\mathcal A_\varphi$.

\begin{definition}[Weak expectation]\label{def:weak-exp}
For $(T, \varphi)$ and $\psi \in \mathcal A_\varphi$, the \emph{weak
expectation} is
\[
\E_{T,\varphi}[\psi]
\;:=\;
\langle T,\, \psi\varphi \rangle.
\]
\end{definition}

A pair $(T, \varphi)$ defines a \emph{generalised probability measure} if
$\langle T, \varphi \rangle = 1$ (normalisation) and
$\langle T, \psi\varphi \rangle \geq 0$ for all non-negative
$\psi \in \mathcal A_\varphi$ (positivity).

\begin{definition}[Weak moments, characteristic function, cumulants]\label{def:weak-objects}
For $n \geq 0$, the $n$-th \emph{weak moment} is
${}^{(\varphi)}m_n := \langle T, x^n\varphi \rangle$.
The \emph{weak characteristic function} is
${}^{(\varphi)}\phi(t) := \langle T, e^{it\cdot}\varphi \rangle$.
When ${}^{(\varphi)}\phi(t) \neq 0$ near $t=0$, the \emph{weak cumulants}
are
${}^{(\varphi)}\kappa_n := (1/i^n)(d^n/dt^n)\log {}^{(\varphi)}\phi(t)|_{t=0}$.
\end{definition}

Since $x^n\varphi \in \mathcal S(\R)$ for every $n$ and every
$\varphi \in \mathcal S(\R)$, all weak moments exist unconditionally.  This
is the key property: for distributions such as the Cauchy or stable laws,
where classical moments fail to exist, the weak moments
${}^{(\varphi)}m_n$ are well defined for every~$n$.

\subsection{Parametric distributional models}\label{subsec:parametric}

Let $(\mathcal X, \mathcal B)$ be a measurable space and
$\mathcal P = \{P_\theta : \theta \in \Theta \subseteq \R^p\}$ a parametric
family.  We assume that each $P_\theta$ admits a distributional
representation $(T_\theta, \varphi)$ with $T_\theta \in \mathcal S'(\R)$
and a \emph{fixed} kernel $\varphi \in \mathcal S(\R)$, shared across the
entire family and not depending on~$\theta$.

The model is \emph{identifiable} if $P_{\theta_1} = P_{\theta_2}$ implies
$\theta_1 = \theta_2$.  We also distinguish \emph{representation
identifiability} (M-determinacy): the weak moments
$\{{}^{(\varphi)}m_n(\theta)\}_{n \geq 0}$, or equivalently the weak
characteristic function ${}^{(\varphi)}\phi_\theta$, determine $T_\theta$
uniquely.  Three levels of uniqueness are established
in~\cite{LabouriauA1}: Carleman-type for positive Schwartz kernels with
exponential tails, Denjoy--Carleman for kernels with exponential-type
decay, and Hermite completeness for Gaussian kernels.

In the parametric setting, we write
${}^{(\varphi)}m_n(\theta) := \langle T_\theta, x^n\varphi \rangle$ and
similarly for weak characteristic functions and cumulants.

\begin{remark}[Role of the kernel]\label{rem:kernel-role}
The kernel $\varphi$ plays a dual role: it regularises the distributional
pairing and determines the admissible class $\mathcal A_\varphi$.  In the
observation-operator framework developed below, the kernel acquires a third
interpretation: it represents the response profile of the measuring
instrument, so that the distributional pairing
$\langle T, \psi\varphi \rangle$ describes what the instrument actually
records.
\end{remark}

\subsection{Observation operators}\label{subsec:obs-operators}

The central conceptual contribution of the present paper is the explicit
introduction of \emph{observation operators} into the statistical
framework.

\subsubsection*{The cylindrical $\sigma$-algebra on $\mathcal S'$}

The space $\mathcal S'(\R)$ carries the \emph{cylindrical $\sigma$-algebra}
$\mathcal C$, defined as the smallest $\sigma$-algebra making all
evaluation maps $T \mapsto \langle T, f \rangle$ measurable for
$f \in \mathcal S(\R)$.  This is the standard choice in the theory of
generalised random processes (Gel'fand and Vilenkin, 1964) and coincides
with the Borel $\sigma$-algebra of the weak-$*$ topology on $\mathcal S'$.
The Bochner--Minlos theorem characterises probability measures on
$(\mathcal S', \mathcal C)$ through their characteristic functionals.

\begin{definition}[Observation operator]\label{def:obs-operator}
An \emph{observation operator} is a measurable map
\[
\mathcal O : (\mathcal S'(\R), \mathcal C)
\;\longrightarrow\;
(\mathcal Y, \mathcal B_{\mathcal Y}),
\]
where $(\mathcal Y, \mathcal B_{\mathcal Y})$ is a measurable space called
the \emph{observation space}.
\end{definition}

All continuous linear functionals on $\mathcal S'$ are measurable with
respect to $\mathcal C$ by definition.  More generally, any continuous
linear map from $\mathcal S'$ to a topological vector space (equipped with
its Borel $\sigma$-algebra) is measurable, and pointwise limits of such
maps are measurable.  This covers all cases of interest.

\subsubsection*{Taxonomy of observation operators}

We distinguish five types of observation operators, ordered from the most
general to the most classical.

\begin{enumerate}[label=(\roman*)]
\item \emph{Point evaluation (Dirac observation).}
For $x \in \R$, define $\mathcal O_x(T_f) = f(x)$ when $T = T_f$ arises
from a continuous density~$f$.  Formally one writes
$\mathcal O_x(T) = \langle T, \delta_x \rangle$, but since
$\delta_x \notin \mathcal S$ this is rigorously defined only on the
subclass of distributions admitting pointwise representatives (\eg
continuous densities).  For general tempered distributions, point evaluation
must be understood through mollifier approximation:
$\mathcal O_x^\varepsilon(T) = \langle T, \varphi_\varepsilon(\cdot - x)
\rangle \to \langle T, \delta_x \rangle$ as $\varepsilon \to 0$, where
$\varphi_\varepsilon$ is a Schwartz mollifier.  This approximation
reinforces the paper's viewpoint: point evaluation is an idealisation of
kernel observation, and the kernel-weighted framework is the mathematically
natural one.
For statistical purposes, the data $X_1, \ldots, X_n$ are random variables
on a probability space taking values in $\R$, and the connection to
$\mathcal S'$ is through the model~$T_\theta$; the observation space is
$\mathcal Y = \R$.  This is the classical observation model.

\item \emph{Kernel observation.}
For fixed $\varphi \in \mathcal S$,
$\mathcal O_\varphi(T) = \langle T, \varphi \rangle$.  The observation
records not a point value but a local average of the underlying object,
weighted by the kernel.  This is the observation produced by a
finite-resolution instrument with response profile~$\varphi$.

\item \emph{Interval observation.}
For an interval $I = [L, R]$, define
$\mathcal O_I(T_f) = \int_I f(x)\,dx$ when $T = T_f$ has a density.
More generally, $\delta_I := \mathbf{1}_I$ defines a compactly supported
positive measure, hence a tempered distribution, and
$\mathcal O_I(T) = \langle T, \delta_I \rangle$ is well defined whenever
$T$ extends to compactly supported distributions as test objects.  The
observation space is $\mathcal Y = [0,1]$.

\item \emph{Convolutional observation.}
For a smoothing kernel $K \in \mathcal S$,
$\mathcal O_K(T) = K * T$, where $*$ denotes convolution.  The
observation is the entire smoothed function
$\mathcal O_K(T) \in C^\infty(\R)$.  This models instrumental blurring,
imaging systems, and low-resolution sensors.

\item \emph{Transform observation.}
For fixed $u \in \R$,
$\mathcal O_u(T) = \langle T, e^{iu\cdot}\varphi \rangle
= {}^{(\varphi)}\phi(u)$.  The observation records a single value of the
weak characteristic function.  Collecting $m$ such observations at
frequencies $u_1, \ldots, u_m$ yields a vector in $\R^{2m}$ (real and
imaginary parts).
\end{enumerate}

\begin{remark}[Interval observations: functional-analytic precision]\label{rem:interval-fa}
The indicator $\mathbf{1}_I$ is not a Schwartz function, so the product
$\mathbf{1}_I \varphi$ does not belong to $\mathcal S(\R)$ in general.
However, $\delta_I = \mathbf{1}_I \in \mathcal S'(\R)$ is a compactly
supported positive measure and hence a well-defined tempered distribution.
When $T = T_f$ for a locally integrable $f$, the pairing
$\langle T_f, \delta_I \rangle = \int_I f(x)\,dx$ is well defined without
passing through test-function multiplication.  Alternatively, one may
approximate $\mathbf{1}_I$ by smooth functions
$\mathbf{1}_{I,\varepsilon} \in \mathcal S$ with
$\mathbf{1}_{I,\varepsilon} \to \mathbf{1}_I$ in $\mathcal S'$; the pairing
then converges by continuity.  A third option is to enlarge the
admissible class beyond~$\mathcal S$.  In the present paper, we adopt the
first (distributional) formulation.
\end{remark}

\subsection{Hierarchy of observation operators}\label{subsec:operator-hierarchy}

The five types listed above differ in their structural properties.  For the
purpose of the theory developed in this paper, it is important to
distinguish three levels of generality.

\subsubsection*{Level~1: Deterministic linear observation operators}

These are continuous linear maps $\mathcal O : \mathcal S'(\R) \to
\mathcal Y$ with $\mathcal Y$ a finite-dimensional space or a function
space.  Kernel observations, interval observations, convolutional
observations, and transform observations at fixed frequencies all belong
to this level.  Linearity ensures that the pushforward model inherits
regularity from the distributional model, and the DQM condition
(Section~\ref{subsec:LAN}) can typically be verified from the original
model.  The asymptotic theory of Sections~\ref{sec:asymptotics}
and~\ref{sec:convolution} is developed rigorously at this level.

\subsubsection*{Level~2: Deterministic nonlinear observation operators}

Threshold operators ($\mathcal O(T_f) = \mathbf{1}_{f(x) > c}$),
truncation operators, and certain data transformations are nonlinear but
deterministic and measurable.  The pushforward model
$P_\theta^{\mathcal O} = P_\theta \circ \mathcal O^{-1}$ is still well
defined, and the asymptotic theory applies provided the LAN condition is
verified for the pushforward family.  However, DQM verification is more
involved than at Level~1, since the chain rule for quadratic mean
differentiability is not as straightforward.

\subsubsection*{Level~3: Stochastic observation operators}

When the observation mechanism itself is random---different instruments
with different response profiles, random censoring intervals, conditional
projections over random effects---one has a family
$\{\mathcal O_\xi : \xi \in \Xi\}$ indexed by a random state~$\xi$.
The resulting observations $Y_i = \mathcal O_{\xi_i}(T_{X_i})$ carry
two sources of randomness.  The theory at this level requires conditional
arguments and is discussed as part of the broader programme in
Section~\ref{sec:discussion}; it is not formally developed in the
present paper.

\begin{remark}[Scope of the asymptotic theory]\label{rem:operator-scope}
The asymptotic results of Sections~\ref{sec:asymptotics}--\ref{sec:convolution}
are stated and proved for deterministic observation operators (Levels~1
and~2).  All concrete examples in Section~\ref{sec:examples} are at
Level~1.  The reader should keep this in mind when the definition of
observation operators (Definition~\ref{def:obs-operator}) is stated in
full generality: the theory currently exploits a smaller class than the
definition permits.
\end{remark}

\subsection{Observational regularisation and the structure of distributions}\label{subsec:obs-regularisation}

The distributional framework separates the latent probabilistic structure
from the observational resolution at which that structure is examined.  The
latent object is represented by a distribution $T \in \mathcal S'(\R)$,
while the observational mechanism is represented by a kernel
$\varphi \in \mathcal S(\R)$.  An inference functional is therefore not
simply an estimating function applied to a probability density; it is a
regularised probe of a distributional object.

This interpretation is supported by the classical \emph{structure theorem}
for tempered distributions (Schwartz, 1966; Strichartz, 1994, Section~6.3):
every $T \in \mathcal S'(\R)$ can be represented as a finite sum
\[
T = \sum_{|\alpha| \leq m} D^\alpha f_\alpha,
\]
where the $f_\alpha$ are continuous functions with at most polynomial
growth.  Thus tempered distributions are not arbitrary pathological objects;
they arise by applying finitely many differential operators to ordinary
functions.  Point masses, jumps, cusps and more complex singular structures
are instances of \emph{differentiated regularity}.  The kernel $\varphi$
acts as the observational device that converts these differentiated
structures into stable scalar quantities amenable to inference.

The \emph{continuity} of distributions on Schwartz space provides the
analytical counterpart of statistical stability.  If
$\varphi_j \to \varphi$ in $\mathcal S(\R)$, then
$\langle T, \varphi_j \rangle \to \langle T, \varphi \rangle$ for every
$T \in \mathcal S'(\R)$.  Hence small smooth perturbations of the
observational kernel produce small perturbations of the corresponding weak
measurement.  Weak moments, weak cumulants, weak characteristic functions,
and distributional inference functionals are all stable under perturbations
of the kernel within the Schwartz topology.

Finally, the \emph{approximation} of singular distributions by smooth test
functions gives a mathematical basis for the finite-resolution viewpoint.
If $\rho \in \mathcal S(\R)$ with $\int \rho = 1$ and
$\rho_\varepsilon(x) = \varepsilon^{-1}\rho(x/\varepsilon)$, then
$\rho_\varepsilon \to \delta_0$ in $\mathcal S'(\R)$ as
$\varepsilon \downarrow 0$.  Classical pointwise observation is thus an
idealised infinite-resolution limit of smooth observational schemes.  The
distributional framework reverses the usual idealisation: instead of
starting with ideal pointwise measurements and adding regularisation
afterwards, it takes finite-resolution observation as primitive and
recovers point evaluation as a limiting case.

\begin{remark}[Scope of the operator-theoretic development]\label{rem:operator-scope-full}
The present paper develops the \emph{statistical consequences} of
observation operators---pushforward experiments, inference functionals,
asymptotic theory, and the information hierarchy---using the three
properties above (structure, continuity, approximation) as the
functional-analytic foundation.  A full operator theory for the space of
observation operators---including topologies on operator classes,
compactness criteria, and regularity theory for operator-valued maps---is
not developed here and remains part of the broader programme.
\end{remark}

\subsection{Pushforward models and induced experiments}\label{subsec:pushforward}

The observation operator $\mathcal O$ induces a family of probability
measures on the observation space.

\begin{definition}[Pushforward model]\label{def:pushforward}
The \emph{pushforward model} induced by $\mathcal O$ is the family
\[
\mathcal P^{\mathcal O}
= \{P_\theta^{\mathcal O} := P_\theta \circ \mathcal O^{-1}
   : \theta \in \Theta\}
\]
of probability measures on $(\mathcal Y, \mathcal B_{\mathcal Y})$.
\end{definition}

Different observation operators applied to the same distributional model
produce different statistical experiments.  Point evaluation recovers the
original model; kernel observation yields a smoothed model; interval
observation produces a coarsened model.  The statistical information
available for inference about $\theta$ depends on the observation
operator---a dependence that the classical framework suppresses by implicitly
fixing $\mathcal O = \delta_x$.

\begin{remark}[Physical interpretation]\label{rem:physical}
In the language of measurement theory, the distributional model
$(T_\theta, \varphi)$ describes the underlying physical quantity, and the
observation operator $\mathcal O$ describes the instrument.  The
pushforward model $P_\theta^{\mathcal O}$ is the distribution of the
instrument's readings.  Two instruments with different response
profiles---corresponding to different observation operators---yield different
pushforward models and, in general, different amounts of statistical
information about~$\theta$.
\end{remark}

\subsection{Empirical functionals}\label{subsec:empirical}

Given $n$ independent observations $Y_i = \mathcal O(T_{X_i})$, where each
$Y_i$ is a realised pushforward observation taking values in~$\mathcal Y$,
the \emph{empirical measure on the observation space} is
\[
\mathbb W_n
= \frac{1}{n} \sum_{i=1}^n \delta_{Y_i},
\]
the empirical distribution of the pushforward data.

When $\mathcal O = \delta_x$ (classical point observations),
$Y_i = X_i$ and $\mathbb W_n$ reduces to the empirical measure
$\mathbb P_n = n^{-1}\sum_{i=1}^n \delta_{X_i}$.  When
$\mathcal O = \mathbf{1}_{I}$ (interval observations), $Y_i$ is the
observed interval indicator and $\mathbb W_n$ records empirical
occupancy proportions.  When
$\mathcal O(T) = \langle T, e^{iu\cdot}\varphi \rangle$ (transform
observations at frequency~$u$), the empirical functional becomes the
empirical weak characteristic function
$\hat\phi_n(u) = n^{-1}\sum e^{iuX_i}\varphi(X_i)$.

The fluctuation process
$\mathbb G_n = \sqrt{n}(\mathbb W_n - P_{\theta_0}^{\mathcal O})$
is the standard empirical process of pushforward observations; it is the
distributional analogue of the classical empirical process.  Its
asymptotic behaviour, and in particular weak convergence to a Gaussian
limit, is discussed in Section~\ref{sec:discussion}.

\section{Inference functionals and estimating equations}\label{sec:inference-functionals}

\subsection{Distributional inference functionals}\label{subsec:dist-if}

In the classical theory, an inference function
$\psi : \mathcal X \times \Theta \to \R^p$ maps observations and
parameters to estimating-equation contributions, with $\psi(\cdot, \theta)$
measurable for each $\theta$.  In the distributional framework, inference
functions are composed with the observation operator.

\begin{definition}[Inference functional]\label{def:inference-functional}
Let $\mathcal O : \mathcal S' \to \mathcal Y$ be an observation operator.
An \emph{inference functional} for the model $\mathcal P$ with observation
operator $\mathcal O$ is a measurable function
\[
\Psi : \mathcal Y \times \Theta \to \R^p
\]
such that $\Psi(\cdot, \theta)$ is measurable for each $\theta \in \Theta$.
\end{definition}

\begin{remark}[Terminology: inference functional vs.\ inference function]\label{rem:terminology}
In the classical theory (Godambe, 1960; J{\o}rgensen and Labouriau, 2012),
the term \emph{inference function} is standard.  We adopt the term
\emph{inference functional} to reflect that, in the distributional setting,
$\Psi$ acts on objects in a function space (elements of~$\mathcal Y$
obtained via observation operators on~$\mathcal S'$) rather than on
point values of random variables.  When the observation operator is point
evaluation and $\mathcal Y = \R^d$, inference functionals reduce to
classical inference functions in the sense of Godambe.  The terminological
shift is thus a generalisation, not a replacement.
\end{remark}

The estimating equation is
\begin{equation}\label{eq:general-estimating}
\frac{1}{n} \sum_{i=1}^n \Psi(Y_i, \theta) = 0,
\qquad Y_i = \mathcal O_i(T_{X_i}).
\end{equation}

This is the most general form.  All estimating equations considered in this
paper are special cases of~\eqref{eq:general-estimating}.
An estimator $\hat\theta_n$ is defined as a \emph{root} of the empirical
estimating equation~\eqref{eq:general-estimating}.  It is important to
observe that the root-finding operates in the observation space~$\mathcal Y$,
not directly in~$\mathcal S'$: the observation operator maps the
distributional model to $\mathcal Y$, and the estimating equation is a
standard finite-dimensional equation in $\R^p$ (for Types~A and~B) or an
operator equation (for Type~C, where additional structure such as inner
products or projections reduces it to finite dimensions).

\subsection{Taxonomy of estimating equations}\label{subsec:taxonomy}

We distinguish three types, corresponding to Layers~I--III of the
introduction and presented in the same order.

\subsubsection*{Layer~I: operator-valued estimating equations}

When observations are genuinely distributional---interval-censored,
convolutionally blurred, or otherwise operator-valued---the estimating
equation takes the form~\eqref{eq:general-estimating} with
$Y_i = \mathcal O_i(T_{X_i})$ a non-trivial functional of the distributional model.

\paragraph{Example (interval-censored location inference).}
Consider the location model $X_i = \mu + \varepsilon_i$ with symmetric
error distribution and suppose only intervals $I_i = [L_i, R_i]$ are
observed.  Using the sinusoidal inference functional
$\Psi(I, \mu) = \int_I \sin(c(x - \mu))\,f_\mu(x)\,\varphi(x)\,dx$,
the estimating equation is
$n^{-1}\sum_{i=1}^n \Psi(I_i, \mu) = 0$.

\subsubsection*{Layer~II: transform-based distributional equations}

When observations are point values $X_i$ but inference is conducted through
distributional identities (weak characteristic functions, oscillatory
equations), the estimating equation is
\begin{equation}\label{eq:transform-estimating}
\frac{1}{n} \sum_{i=1}^n \psi(X_i, \theta)\,\varphi(X_i) = 0,
\end{equation}
where $\psi$ is constructed from transforms rather than from densities.

\paragraph{Example (sinusoidal inference function).}
For a location parameter $\theta$ with symmetric distribution,
$\psi_c(x, \theta) = \sin(c(x - \theta))$ and the estimating equation is
$n^{-1}\sum \sin(c(X_i - \theta))\,\varphi(X_i) = 0$.  This requires
neither a density nor finite moments.

\paragraph{Example (weak characteristic function equation).}
$\psi_u(x, \theta) = e^{iux} - {}^{(\varphi)}\phi_\theta(u)$ defines a
complex-valued inference function targeting the weak characteristic function
at frequency~$u$.

\subsubsection*{Layer~III: kernel-weighted classical estimating equations}

When a classical inference function $\psi(x, \theta)$ is available, the
distributional pairing with point data yields the kernel-weighted equation
\begin{equation}\label{eq:kernel-weighted}
\frac{1}{n} \sum_{i=1}^n \psi(X_i, \theta)\,\varphi(X_i) = 0.
\end{equation}
The classical (unweighted) form
$n^{-1}\sum \psi(X_i, \theta) = 0$
is recovered by defining the inference function through classical
expectations $\E_{P_\theta}[\psi(X, \theta)] = 0$ rather than through the
distributional pairing, corresponding formally to $\varphi \equiv 1$.

\begin{remark}[Distributional observations vs.\ distributionally informed inference]\label{rem:obs-vs-inf}
The three layers differ in \emph{where} the distributional framework
intervenes.  Layers~III and~II retain classical point observations
$X_1, \ldots, X_n$ but use distributional constructions---kernel weighting
or transform-based identities---at the inferential level.  Layer~I, by
contrast, changes the observation mechanism itself: the data are no longer
point values but operator-valued functionals of the distributional model.
This distinction is important because it determines which part of the
information hierarchy is affected: Layers~II and~III act on the gap
$I_{\mathcal O} - G_\Psi$ (estimation cost), while Layer~I also affects
the gap $I_{\mathrm{classical}} - I_{\mathcal O}$ (observation cost).
\end{remark}

\begin{remark}[Relationship between Layers~II and~III]\label{rem:layers-II-III}
The algebraic form of the estimating
equations~\eqref{eq:transform-estimating} and~\eqref{eq:kernel-weighted} is
the same: both involve the product $\psi(X_i, \theta)\varphi(X_i)$.  The
distinction is conceptual: in Layer~III, one starts from a classical
inference function $\psi$ and multiplies by~$\varphi$; in Layer~II, the
inference function $\psi$ is intrinsically distributional (\eg sinusoidal
or transform-based) and does not arise from a classical score.  The
unifying observation is that both are instances of the distributional
pairing
$\langle \delta_{X_i}, \psi(\cdot, \theta)\varphi \rangle
= \psi(X_i, \theta)\varphi(X_i)$.
\end{remark}

\subsection{Regular inference functionals}\label{subsec:regular-if}

\begin{definition}[Regular inference functional]\label{def:rif}
An inference functional $\Psi : \mathcal Y \times \Theta \to \R^p$ is
\emph{regular} if:
\begin{enumerate}
\item[(R1)] \textbf{Admissibility:} for each $\theta$, $\Psi(\cdot, \theta)$
belongs to a suitable class of measurable functions on $\mathcal Y$.  In the
distributional setting with point observations, this means
$\psi(\cdot, \theta) \in \mathcal A_\varphi$.

\item[(R2)] \textbf{Unbiasedness:} for all $\theta \in \Theta$,
\[
\E_{P_\theta^{\mathcal O}}[\Psi(Y, \theta)] = 0.
\]

\item[(R3)] \textbf{Identification:} the map
$\theta \mapsto \E_{P_\theta^{\mathcal O}}[\Psi(Y, \theta)]$ has a unique
zero at the true parameter value $\theta_0$.
\end{enumerate}
\end{definition}

When the observation operator is point evaluation ($\mathcal O = \delta_x$)
and the distributional pairing is used, condition (R2) becomes
$\langle T_\theta, \psi(\cdot, \theta)\varphi \rangle = 0$, recovering the
definition in the companion paper~\cite{LabouriauA2}.

\subsection{Classical regularity conditions}\label{subsec:classical-reg}

In classical inference-function theory (Godambe, 1960; J{\o}rgensen and
Labouriau, 2012), the following conditions are standard:

\begin{enumerate}[label=(C\arabic*)]
\item \emph{Measurability and differentiability:} $\psi(x,\theta)$ is
measurable in $x$ and continuously differentiable in $\theta$.

\item \emph{Unbiasedness:} $\E_{P_\theta}[\psi(X,\theta)] = 0$ for all
$\theta$.

\item \emph{Non-singular sensitivity:}
$S(\theta) = -\E_{P_\theta}[\partial_\theta \psi(X,\theta)]$ is non-singular.

\item \emph{Finite variability:}
$V(\theta) = \E_{P_\theta}[\psi(X,\theta)\psi(X,\theta)^\top] < \infty$.

\item \emph{Interchange:}
$\partial_\theta \E_{P_\theta}[\psi(X,\theta)]
= \E_{P_\theta}[\partial_\theta \psi(X,\theta)]$.
\end{enumerate}

In the distributional setting, several of these are automatic.  When
$\psi(\cdot, \theta)\varphi \in \mathcal S$, the interchange~(C5) follows
from continuity of the distributional pairing.  When $\psi$ is bounded
(as for sinusoidal inference functions), the variability~(C4) is automatic.
The admissibility condition replaces~(C1).

\subsection{Comparison with classical inference functions}\label{subsec:comparison}

Definition~\ref{def:rif} extends the classical framework in two directions:
expectations are interpreted through the pushforward measure
$P_\theta^{\mathcal O}$ (allowing observation operators beyond point
evaluation), and the admissibility condition replaces classical
integrability by compatibility with the kernel~$\varphi$.

When $P_\theta$ admits a regular density $p_\theta$ and the observation
operator is point evaluation, the score function
$\psi(x, \theta) = \partial_\theta \log p_\theta(x)$ is a regular
inference functional and conditions (C1)--(C5) reduce to the standard
regularity conditions of likelihood theory.

\section{Asymptotic theory}\label{sec:asymptotics}

\subsection{Continuity with respect to the observational kernel}\label{subsec:kernel-continuity}

Before developing the asymptotic theory, we record a property that
underpins the stability of distributional inference: the continuity of the
distributional pairing with respect to the kernel.

\begin{proposition}[Kernel continuity of inference functionals]\label{prop:kernel-cont}
Let $T \in \mathcal S'(\R)$ and let
$\Psi_\varphi(T, \theta) := \langle T, \psi(\cdot, \theta)\varphi \rangle$
be an inference functional defined through the distributional pairing.  If
$\varphi_j \to \varphi$ in $\mathcal S(\R)$ and
$\psi(\cdot, \theta)\varphi_j \to \psi(\cdot, \theta)\varphi$ in
$\mathcal S(\R)$ (which holds whenever $\psi(\cdot, \theta)$ is a
multiplier of~$\mathcal S$, \eg a polynomial or an exponential
$e^{iu\cdot}$), then
\[
\Psi_{\varphi_j}(T, \theta) \;\longrightarrow\; \Psi_\varphi(T, \theta)
\]
for each $\theta \in \Theta$.
\end{proposition}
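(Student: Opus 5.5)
The plan is to reduce the statement to the sequential continuity of the tempered distribution $T$ on $\mathcal S(\R)$. By definition, $T \in \mathcal S'(\R)$ is a continuous linear functional on the Fréchet space $\mathcal S(\R)$. Continuity on a metrisable space is equivalent to sequential continuity, so $f_j \to f$ in $\mathcal S(\R)$ implies $\langle T, f_j\rangle \to \langle T, f\rangle$. Applying this with $f_j = \psi(\cdot,\theta)\varphi_j$ and $f = \psi(\cdot,\theta)\varphi$, which converge by hypothesis, gives $\Psi_{\varphi_j}(T,\theta) \to \Psi_\varphi(T,\theta)$ for each fixed $\theta$. In this form the statement is immediate. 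The only content worth spelling out is the parenthetical claim that the convergence hypothesis holds whenever $\psi(\cdot,\theta)$ is a multiplier of $\mathcal S$.

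For that claim, I will show the following. If $g \in C^\infty(\R)$ and every derivative of $g$ has at most polynomial growth (the class $\mathcal O_M$ of Schwartz multipliers), then $h \mapsto gh$ is a continuous linear map $\mathcal S \to \mathcal S$. Fix a seminorm $p_{a,b}(h) = \sup_x |x^a \partial^b h(x)|$. By the Leibniz rule,
\[
x^a\partial^b(gh) = \sum_{k\le b}\binom{b}{k}\, x^a\,\partial^{b-k}g\;\partial^k h .
\]
Using $|\partial^{b-k} g(x)| \le C_{b-k}(1+|x|)^{N_{b-k}}$, each term is bounded by a finite combination of seminorms $p_{a',k}(h)$ with $a' \le a + N_{b-k}$. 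This yields $p_{a,b}(gh) \le C\sum p_{a',k}(h)$, which is the required continuity estimate. Linearity then gives $p_{a,b}\bigl(g(\varphi_j-\varphi)\bigr)\to 0$ for every $(a,b)$, i.e.\ $g\varphi_j \to g\varphi$ in $\mathcal S$.

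It remains to check that polynomials and exponentials $e^{iu\cdot}$ belong to $\mathcal O_M$. For a polynomial, all derivatives are polynomials. For the exponential, $\partial^k e^{iux} = (iu)^k e^{iux}$ is bounded. Both facts are immediate.

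The main obstacle is keeping the bookkeeping honest rather than any deep step. One point needs care: the hypothesis $\psi\varphi_j \to \psi\varphi$ in $\mathcal S$ does \emph{not} follow merely from $\psi \in \mathcal A_\varphi$, because admissibility for one kernel says nothing about the perturbed kernels $\varphi_j$. The proof should therefore state the multiplier condition explicitly as the sufficient condition and note that it is uniform in $j$. A second point arises when $\psi(\cdot,\theta)$ is complex-valued, as for $e^{iu\cdot}$. Here the pairing should be read on complex-valued Schwartz functions, extending $T$ complex-linearly, or equivalently the argument is applied separately to real and imaginary parts. Finally, the convergence is pointwise in $\theta$ only. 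No uniformity in $\theta$ is claimed, so no equicontinuity argument is needed.
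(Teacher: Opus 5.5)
Your proposal is correct and takes the same approach as the paper, whose entire proof is the one-line observation that $T$ is continuous as a linear functional on $\mathcal S(\R)$, applied to $\psi(\cdot,\theta)\varphi_j \to \psi(\cdot,\theta)\varphi$. Your Leibniz-rule verification of the parenthetical claim that multipliers of $\mathcal S(\R)$, such as polynomials and $e^{iu\cdot}$, yield this convergence goes beyond the paper, which asserts that claim without proof, and your remarks on the complex-valued pairing are also sound.
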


\begin{proof}
This is immediate from the continuity of $T$ as a linear functional on
$\mathcal S(\R)$.
\end{proof}

The proposition has a direct inferential interpretation.  It implies that
distributional inference functionals depend continuously on the
observational kernel: if the instrument's response profile is slightly
modified, the value of the inference functional changes by a correspondingly
small amount.  This continuity is the analytical counterpart of
\emph{statistical stability} and explains why kernel-based distributional
inference can remain well behaved in situations where classical
pointwise or likelihood-based procedures become unstable.

In the context of the asymptotic theory below, kernel continuity ensures
that the sensitivity and variability matrices $S(\theta)$ and $V(\theta)$
depend continuously on the kernel $\varphi$, so that the asymptotic
variance $G_\Psi^{-1}$ is a continuous functional of the observational
resolution.

\subsection{Setup and notation}\label{subsec:asymp-setup}

We develop the asymptotic theory for estimators defined through regular
inference functionals.  For concreteness, we state results for a fixed
observation operator $\mathcal O$ and i.i.d.\ data.  The formulation
covers both kernel-weighted and transform-based estimating equations as
special cases.

Let $Y_1, \ldots, Y_n$ be i.i.d.\ random elements of $\mathcal Y$ with
common distribution $P_{\theta_0}^{\mathcal O}$.  (When the observation
operator is point evaluation, $Y_i = X_i$ and
$P_{\theta_0}^{\mathcal O} = P_{\theta_0}$.)  Let
$\Psi : \mathcal Y \times \Theta \to \R^p$ be a regular inference
functional.  Define
\[
\bar\Psi_n(\theta)
= \frac{1}{n} \sum_{i=1}^n \Psi(Y_i, \theta),
\qquad
\bar\Psi(\theta)
= \E_{P_{\theta_0}^{\mathcal O}}[\Psi(Y, \theta)].
\]
An estimator $\hat\theta_n$ is a solution of
$\bar\Psi_n(\theta) = 0$.

\subsection{Consistency}\label{subsec:consistency}

\begin{theorem}[Consistency]\label{thm:consistency}
Suppose that:
\begin{enumerate}
\item \textbf{Glivenko--Cantelli:} for each compact $K \subset \Theta$,
$\sup_{\theta \in K} \|\bar\Psi_n(\theta) - \bar\Psi(\theta)\|
\xrightarrow{P} 0$.

\item \textbf{Identification:} $\bar\Psi(\theta_0) = 0$ and $\theta_0$ is the unique root of
$\bar\Psi(\theta) = 0$.

\item \textbf{Continuity:} the map $\theta \mapsto \bar\Psi(\theta)$ is
continuous.
\end{enumerate}
Then any sequence of solutions $\hat\theta_n$ of $\bar\Psi_n(\theta) = 0$
satisfies $\hat\theta_n \xrightarrow{P} \theta_0$.
\end{theorem}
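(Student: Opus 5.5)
The plan is the standard argument for $Z$-estimators (as in van der Vaart's treatment of estimating equations): combine uniform convergence with a well-separated zero. The deterministic step comes first. Fix $\varepsilon>0$ and a compact $K\subset\Theta$ containing $\theta_0$ in its interior. The set $K_\varepsilon := K\setminus\{\theta:\|\theta-\theta_0\|<\varepsilon\}$ is compact. By the Continuity hypothesis, $\theta\mapsto\|\bar\Psi(\theta)\|$ is continuous, so it attains its infimum $\eta_\varepsilon$ on $K_\varepsilon$. By Identification, $\theta_0$ is the only zero, so $\eta_\varepsilon>0$. This is the well-separation property restricted to $K$.

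The stochastic step is a set inclusion. On the event $\{\hat\theta_n\in K\}\cap\{\|\hat\theta_n-\theta_0\|\ge\varepsilon\}$, we have $\hat\theta_n\in K_\varepsilon$. Since $\bar\Psi_n(\hat\theta_n)=0$,
\[
\eta_\varepsilon \le \|\bar\Psi(\hat\theta_n)\| = \|\bar\Psi(\hat\theta_n)-\bar\Psi_n(\hat\theta_n)\| \le \sup_{\theta\in K}\|\bar\Psi_n(\theta)-\bar\Psi(\theta)\|.
\]
By the Glivenko--Cantelli hypothesis, the probability of the right-hand side exceeding $\eta_\varepsilon$ tends to zero. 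Hence $P\bigl(\hat\theta_n\in K,\ \|\hat\theta_n-\theta_0\|\ge\varepsilon\bigr)\to0$.

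The main obstacle is removing the restriction $\hat\theta_n\in K$. The hypotheses as stated give uniform convergence only on compacts and say nothing about $\bar\Psi$ near the boundary of $\Theta$ or at infinity. A root could escape to infinity, for instance if $\|\bar\Psi(\theta)\|\to0$ as $\|\theta\|\to\infty$; for sinusoidal functionals this really can happen, since $\bar\Psi$ decays as the location parameter moves far away. So I would first try to close the argument under one of two explicit additional conditions, and flag that one of them is needed:
\begin{enumerate}
\item[(a)] $\Theta$ is compact, so that we may take $K=\Theta$ above and the proof is complete; or
\item[(b)] the roots are tight, in the sense that for every $\delta>0$ there is a compact $K$ with $\limsup_n P(\hat\theta_n\notin K)<\delta$.
\end{enumerate}
Under (b), enlarge $K$ to contain $\theta_0$ in its interior. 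Then $\limsup_n P(\|\hat\theta_n-\theta_0\|\ge\varepsilon)\le\delta$ for every $\delta>0$, which gives the claim.

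An alternative to (b) is the well-separation condition $\inf_{\|\theta-\theta_0\|\ge\varepsilon}\|\bar\Psi(\theta)\|>0$ on all of $\Theta$, combined with uniform convergence on all of $\Theta$; the same inequality chain then works directly. I would also remark that, in the measurable-selection sense, $\hat\theta_n$ is taken to be any measurable root, so the probabilities above are well defined, using outer probability if necessary.
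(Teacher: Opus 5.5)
Your argument is the one the paper intends: uniform convergence plus a well-separated zero, as in van der Vaart's Theorem~5.9. It is correct under each of the extra conditions you list.

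More importantly, the obstacle you flag is genuine, and it sits exactly where the paper's own proof breaks. The paper asserts that continuity of $\bar\Psi$ and uniqueness of its root give $\inf_{\|\theta-\theta_0\|\geq\eta}\|\bar\Psi(\theta)\|>0$ on all of $\Theta$. That only follows after restricting to a compact set. In addition, Theorem~5.9 requires uniform convergence over all of $\Theta$, not just over compacts.

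The statement as written is in fact false. Take $\Theta=\R$, $\theta_0=0$, $Y_i\sim N(0,1)$ and $\Psi(y,\theta)=\theta e^{-\theta^2}-y\,e^{-|\theta|}$. This is unbiased in the model $Y\sim N(\theta e^{|\theta|-\theta^2},1)$, so it is a legitimate inference functional.
\begin{itemize}
\item $\bar\Psi(\theta)=\theta e^{-\theta^2}$ is continuous with unique root $0$.
\item $\sup_{\theta\in\R}|\bar\Psi_n(\theta)-\bar\Psi(\theta)|=|\bar Y_n|\to0$.
\item Whenever $0<|\bar Y_n|<1$, the equation $\bar\Psi_n(\theta)=0$ has, besides a root near $0$, a second root of the same sign as $\bar Y_n$ with $|\theta|e^{|\theta|-\theta^2}=|\bar Y_n|$ and $|\theta|>1$. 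This root diverges as $\bar Y_n\to0$.
\end{itemize}
Selecting that root gives an inconsistent sequence of solutions. Because uniform convergence holds here on the whole line, what must be added is a separation or tightness ingredient: your (a), your (b), or global well-separation. A stronger Glivenko--Cantelli condition alone would not suffice.

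One correction to your motivating remark. For $\sin(c(x-\theta))$, with or without the factor $\varphi(x)$, $\bar\Psi$ does not decay; it is periodic in $\theta$. In the symmetric location model, $\bar\Psi(\theta)=\phi_Y(c)\sin(c(\theta_0-\theta))$. So on $\Theta=\R$ it is the identification hypothesis itself that fails. This is why the paper picks the root nearest a robust pilot, which in effect localises to a compact neighbourhood. The decay mechanism you describe does occur elsewhere, for instance for the interval-censored functional $\int_I\sin(c(x-\mu))f_\mu(x)\varphi(x)\,dx$.
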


\begin{proof}
From the Glivenko--Cantelli condition~(1), the uniform convergence
$\sup_{\theta \in K}\|\bar\Psi_n(\theta) - \bar\Psi(\theta)\|
\xrightarrow{P} 0$ holds on every compact~$K$.  Continuity of
$\bar\Psi$ and the unique-root assumption give the separation condition
$\inf_{|\theta - \theta_0| \geq \eta} \|\bar\Psi(\theta)\| > 0$ for
every $\eta > 0$.  The result follows from standard Z-estimator arguments
(van der Vaart, 1998, Theorem~5.9).
\end{proof}

\begin{remark}[Sufficient conditions for the Glivenko--Cantelli property]\label{rem:GC-sufficient}
Two conditions are commonly used to verify assumption~(1) above.

\emph{Dominated Lipschitz condition.}  There exists
$L : \mathcal Y \to [0, \infty)$ with $\E[L(Y)] < \infty$ such that
$\|\Psi(y, \theta_1) - \Psi(y, \theta_2)\| \leq L(y)\|\theta_1 - \theta_2\|$
for all $\theta_1, \theta_2 \in K$.  This directly implies the
Glivenko--Cantelli property.  For sinusoidal inference functions, the
Lipschitz condition holds globally with $L(y) = c$ (since $\sin$ is
Lipschitz with constant~$1$).  This is the condition used in all
examples of the present paper.

\emph{Continuity with integrable envelope.}  If $\theta \mapsto
\Psi(y, \theta)$ is continuous for $P_{\theta_0}^{\mathcal O}$-almost
all~$y$, and for each compact $K \subset \Theta$ there exists
$M_K : \mathcal Y \to [0, \infty)$ with $\E[M_K(Y)] < \infty$ such that
$\sup_{\theta \in K}\|\Psi(y,\theta)\| \leq M_K(y)$, then the class
$\{\Psi(\cdot,\theta) : \theta \in K\}$ is Glivenko--Cantelli provided
the bracketing integral is finite---a condition that holds for parametric
families with continuous sample paths on compact parameter sets
(van der Vaart, 1998, Theorem~19.4).
\end{remark}

\subsection{Sensitivity and variability}\label{subsec:SV}

\begin{assumption}\label{ass:diff}
For $\theta$ in a neighbourhood of $\theta_0$:
\begin{enumerate}
\item $\theta \mapsto \Psi(y, \theta)$ is differentiable for
$P_{\theta_0}^{\mathcal O}$-almost all~$y$.

\item Differentiation under the expectation is valid:
$\partial_\theta \bar\Psi(\theta)
= \E[\partial_\theta \Psi(Y, \theta)]$.
\end{enumerate}
\end{assumption}

\begin{definition}[Sensitivity and variability]\label{def:SV}
The \emph{sensitivity matrix} is
\[
S(\theta)
= -\E_{P_\theta^{\mathcal O}}\!\bigl[\partial_\theta \Psi(Y, \theta)\bigr],
\]
and the \emph{variability matrix} is
\[
V(\theta)
= \E_{P_\theta^{\mathcal O}}\!\bigl[\Psi(Y, \theta)\,\Psi(Y, \theta)^\top\bigr].
\]
\end{definition}

In regular parametric models with $\mathcal O = \delta_x$, the sensitivity
coincides with the Fisher information when $\Psi$ is the score function.

\subsection{Asymptotic normality}\label{subsec:AN}

\begin{theorem}[Asymptotic normality]\label{thm:AN}
Under the conditions of Theorem~\ref{thm:consistency},
Assumption~\ref{ass:diff}, $S(\theta_0)$ non-singular, and
$V(\theta_0)$ finite and positive definite,
\[
\sqrt{n}\,(\hat\theta_n - \theta_0)
\;\xrightarrow{\;\mathcal D\;}\;
\mathcal N\!\left(0,\;
S(\theta_0)^{-1}\, V(\theta_0)\, S(\theta_0)^{-\top}
\right).
\]
\end{theorem}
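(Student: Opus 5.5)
The plan is the standard Z-estimator argument: a one-term Taylor (mean-value) expansion of the empirical estimating function around $\theta_0$, combined with the consistency already established in Theorem~\ref{thm:consistency}. By that theorem, $\hat\theta_n \xrightarrow{P} \theta_0$, so with probability tending to one $\hat\theta_n$ lies in the neighbourhood where Assumption~\ref{ass:diff} holds.

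\textbf{Step 1 (expansion).} Since $\bar\Psi_n(\hat\theta_n)=0$, expand componentwise:
\[
0=\bar\Psi_n(\hat\theta_n)=\bar\Psi_n(\theta_0)+\dot{\bar\Psi}_n(\tilde\theta_n)\,(\hat\theta_n-\theta_0).
\]
Here $\dot{\bar\Psi}_n=n^{-1}\sum_i\partial_\theta\Psi(Y_i,\cdot)$, and each row is evaluated at its own intermediate point $\tilde\theta_n$ on the segment between $\hat\theta_n$ and $\theta_0$. If pathwise differentiability is only almost sure and the mean-value form is delicate, I would instead use the integral form of the remainder.

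\textbf{Step 2 (central term).} By (R2), $\E_{P_{\theta_0}^{\mathcal O}}[\Psi(Y,\theta_0)]=0$, and $V(\theta_0)$ is finite. The multivariate central limit theorem then gives
\[
\sqrt n\,\bar\Psi_n(\theta_0)\xrightarrow{\mathcal D}\mathcal N\bigl(0,V(\theta_0)\bigr).
\]

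\textbf{Step 3 (Jacobian term; the main obstacle).} We must show $\dot{\bar\Psi}_n(\tilde\theta_n)\xrightarrow{P}-S(\theta_0)$. The weak law of large numbers gives $\dot{\bar\Psi}_n(\theta_0)\to\E[\partial_\theta\Psi(Y,\theta_0)]=-S(\theta_0)$, which by Assumption~\ref{ass:diff}(2) equals $\partial_\theta\bar\Psi(\theta_0)$. Passing from $\theta_0$ to the random point $\tilde\theta_n$ needs some uniformity. I would argue via a uniform law of large numbers for $\partial_\theta\Psi$ on a compact neighbourhood $K$ of $\theta_0$; this follows, as in Remark~\ref{rem:GC-sufficient}, from continuity of $\theta\mapsto\partial_\theta\Psi(y,\theta)$ together with an integrable envelope $\sup_{K}\|\partial_\theta\Psi(y,\theta)\|\le M_K(y)$. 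Continuity of $\theta\mapsto\E[\partial_\theta\Psi(Y,\theta)]$ then follows by dominated convergence, and consistency of $\tilde\theta_n$ completes the step.

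These regularity conditions are implicit in ``Assumption~\ref{ass:diff}'' as the paper uses it, and they hold for the sinusoidal examples, where derivatives are bounded by $c^2$ times a bounded factor. Alternatively, if one only has a dominated Lipschitz condition on $\Psi$, I would switch to van der Vaart's Theorem~5.21. That route requires only differentiability of $\bar\Psi$ at $\theta_0$ with derivative $-S(\theta_0)$, plus $L^2$ Lipschitz control, which yields stochastic equicontinuity of $\sqrt n(\bar\Psi_n-\bar\Psi)$.

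\textbf{Step 4 (conclusion).} Non-singularity of $S(\theta_0)$ means $\dot{\bar\Psi}_n(\tilde\theta_n)$ is invertible with probability tending to one. Solving the expansion gives
\[
\sqrt n(\hat\theta_n-\theta_0)=S(\theta_0)^{-1}\sqrt n\,\bar\Psi_n(\theta_0)+o_P(1).
\]
Slutsky's lemma and the continuous mapping theorem then give the limit $\mathcal N\bigl(0,S^{-1}VS^{-\top}\bigr)$. Positive definiteness of $V(\theta_0)$ ensures that this limit is non-degenerate.
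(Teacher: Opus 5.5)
Your proposal is correct and follows the paper's proof: a mean-value expansion of $\bar\Psi_n$ about $\theta_0$, the CLT for $\sqrt{n}\,\bar\Psi_n(\theta_0)$, convergence of $\partial_\theta\bar\Psi_n(\tilde\theta_n)$ to $-S(\theta_0)$, and Slutsky's theorem. In Step~3, the row-wise intermediate points and the uniform law of large numbers for $\partial_\theta\Psi$ near $\theta_0$ supply what the paper's one-line appeal to ``the LLN'' at the random point $\tilde\theta_n$ tacitly needs; you should state that uniformity as an added hypothesis rather than claim it is implicit in Assumption~\ref{ass:diff}, which as written does not imply it.
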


\begin{proof}
Taylor expansion of $\bar\Psi_n$ around $\theta_0$:
\[
0 = \bar\Psi_n(\hat\theta_n)
= \bar\Psi_n(\theta_0)
+ \bigl[\partial_\theta \bar\Psi_n(\tilde\theta_n)\bigr]
  (\hat\theta_n - \theta_0),
\]
for some intermediate $\tilde\theta_n$.  By the CLT,
$\sqrt{n}\,\bar\Psi_n(\theta_0) \Rightarrow \mathcal N(0, V(\theta_0))$,
and by the LLN,
$\partial_\theta \bar\Psi_n(\tilde\theta_n) \xrightarrow{P} -S(\theta_0)$.
Slutsky's theorem gives the result.
\end{proof}

\subsection{Godambe information}\label{subsec:godambe}

\begin{definition}[Godambe information]\label{def:godambe}
The \emph{Godambe information matrix} associated with inference functional
$\Psi$ is
\[
G_\Psi(\theta)
= S(\theta)^\top V(\theta)^{-1} S(\theta).
\]
\end{definition}

The asymptotic covariance matrix is
$S^{-1}VS^{-\top} = G_\Psi^{-1}$ (in the scalar case, the asymptotic
variance is $1/(nG_\Psi)$).  The Godambe information thus characterises
asymptotic efficiency within the class of regular inference functionals
sharing the same observation operator.

\begin{theorem}[Godambe inequality]\label{thm:godambe-ineq}
Among all regular inference functionals $\Psi$ for the pushforward model
$\mathcal P^{\mathcal O}$ satisfying conditions (R1)--(R3) and
(C1)--(C5), the Godambe information satisfies
\[
G_\Psi(\theta)
\;\leq\;
G_{\Psi^*}(\theta)
\]
in the L\"owner order, where $\Psi^* \propto V^{-1}S$ is the optimal
inference functional.  Equality holds if and only if $\Psi$ is
equivalent to $\Psi^*$.
\end{theorem}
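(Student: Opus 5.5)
The plan is to reduce the inequality to a matrix Cauchy--Schwarz (projection) argument in $L^2(P_\theta^{\mathcal O})$, working at a fixed $\theta$. Two preliminary remarks fix the setting.

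\emph{Reading of the statement.} Since $S$ and $V$ depend on $\Psi$, I interpret ``$\Psi^* \propto V^{-1}S$'' in the standard Godambe sense. Fix a class $\mathcal G$ of regular inference functionals generated by a base vector $\psi = (\psi_1,\dots,\psi_k)^\top$ of elementary unbiased functionals, so that $\mathcal G = \{A(\theta)\psi : A \in \R^{p\times k}\}$. For the base vector write $S_\psi = -\E[\partial_\theta\psi]$ and $V_\psi = \E[\psi\psi^\top]$. The optimum is then
\[
\Psi^* = S_\psi^\top V_\psi^{-1}\psi .
\]
When $\mathcal G$ is rich enough to contain the score of the pushforward model, $\Psi^*$ is that score.

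\emph{Invariance.} For non-singular $A(\theta)$, one has $S_{A\Psi} = AS_\Psi$ and $V_{A\Psi} = AV_\Psi A^\top$, so $G_{A\Psi} = G_\Psi$. This justifies the phrase ``equivalent to $\Psi^*$'' and lets me normalise freely.

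\textbf{Step 1 (sensitivity as a covariance).} Assume the pushforward family has a score $U_\theta$, which holds under the DQM condition that will be used later for the convolution theorem. Differentiate the unbiasedness identity (R2), $\int \Psi(y,\theta)\,dP_\theta^{\mathcal O}(y) = 0$, in $\theta$ and use the interchange condition (C5). This gives
\[
S_\Psi(\theta) = \E\bigl[\Psi(Y,\theta)\,U_\theta(Y)^\top\bigr].
\]
If one prefers to avoid the score, the same identity for $S_\psi$ can be taken directly from the definition. Either way, the only structure needed is that $S$ is a cross-moment of $\Psi$ against a fixed random vector.

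\textbf{Step 2 (projection inequality).} For $\Psi = A\psi$, set $W = S_{\Psi}^{-1}\Psi$. This is a standardised version of $\Psi$ with $\E[WU^\top] = I$ (by Step 1) and $\E[WW^\top] = G_\Psi^{-1}$. Similarly set $W^* = G_{\Psi^*}^{-1}\Psi^*$. Using $S_{\Psi^*} = V_{\Psi^*} = G_{\Psi^*} = S_\psi^\top V_\psi^{-1}S_\psi$, the key orthogonality is
\[
\E\bigl[(W - W^*)\,\Psi^{*\top}\bigr] = 0 .
\]
This follows because $\E[W\psi^\top]V_\psi^{-1}S_\psi = S_\Psi^{-1}A\,S_\psi = I$, i.e.\ both $W$ and $W^*$ have unit cross-moment with $\Psi^*$. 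Expanding $W = W^* + (W - W^*)$ then gives
\[
G_\Psi^{-1} = \E[WW^\top] = G_{\Psi^*}^{-1} + \E\bigl[(W-W^*)(W-W^*)^\top\bigr] \;\geq\; G_{\Psi^*}^{-1},
\]
whence $G_\Psi \leq G_{\Psi^*}$ in the L\"owner order, by the order-reversing property of inversion on positive definite matrices.

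\textbf{Step 3 (equality).} Equality forces $W = W^*$ almost surely, i.e.\ $\Psi = S_\Psi G_{\Psi^*}^{-1}\Psi^*$, which is a non-singular matrix multiple of $\Psi^*$. The converse is the invariance remark above.

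\emph{Main obstacle.} The computations are routine; the delicate point is making precise the class over which optimality is asserted. The matrices $S$, $V$ and $\Psi^*$ must be defined relative to a common linear class, and non-singularity of $S_\psi$ and $V_\psi$ (conditions (C3)--(C4)) must hold throughout it. I would state this class explicitly at the start of the proof. I would also note that taking $\mathcal G$ to contain $U_\theta$ gives $\Psi^* = U_\theta$ and $G_{\Psi^*} = I_{\mathcal O}$, which anticipates the hierarchy of Section~\ref{sec:convolution}.
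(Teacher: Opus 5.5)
The paper does not prove this theorem: it states it as the classical Godambe result and implicitly relies on that literature. The only argument it gives for an inequality of this kind is Corollary~\ref{cor:G-leq-I}, which derives $G_\Psi \leq I_{\mathcal O}$ asymptotically from LAN of the pushforward model, the H\'ajek--Le~Cam convolution theorem and Proposition~\ref{prop:regularity-bridge}.

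Your direct Godambe--Heyde projection argument is correct. It rests on reading $\Psi^* \propto V^{-1}S$ as the optimal combination within a linear class $\{A\psi\}$, which is the natural reading since $S$ and $V$ depend on $\Psi$. The decomposition $G_\Psi^{-1} = G_{\Psi^*}^{-1} + \E[(W-W^*)(W-W^*)^\top]$ then gives the inequality and the equality case at once. State the non-degeneracy correctly: you need $S_\psi$ of full column rank (the interesting case is $k>p$), $AS_\psi$ non-singular, and $V_\psi$ positive definite, which is not part of (C4).

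Step~1, and hence any density for $P_\theta^{\mathcal O}$, is never used in Steps~2--3. Only two facts enter:
$S_{A\psi} = AS_\psi$, which follows from the definition of $S$ and unbiasedness (this kills the derivative of a $\theta$-dependent $A$), and $V_{A\psi} = AV_\psi A^\top$.
Inside the class, $\Psi^*$ is itself the ``fixed random vector'', since $\E[A\psi\,\Psi^{*\top}] = AS_\psi = S_{A\psi}$. This suits the distributional setting, where densities may be absent. It also spares you the extra hypothesis Step~1 really needs: differentiating $\int \Psi\,dP_\theta^{\mathcal O}$ in the measure as well. The paper's (C5), read literally together with (C2), would force $S=0$.

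Conversely, when the pushforward score exists, Step~1 plus Step~2 with $W^* = I_{\mathcal O}^{-1}U_\theta$ proves Corollary~\ref{cor:G-leq-I} at fixed $\theta$. This needs no LAN, consistency or regularity-bridge hypotheses. It also yields the corollary's equality case, $S_\Psi^{-1}\Psi = I_{\mathcal O}^{-1}U_\theta$ almost surely, which the paper only asserts when passing from $M=\delta_0$ to proportionality with the score.

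What the convolution route buys is interpretation. It identifies $G_\Psi^{-1} - I_{\mathcal O}^{-1}$ with the covariance of the noise $M$, and so links Godambe's criterion to the efficiency of regular estimators. But it compares only against $I_{\mathcal O}^{-1}$. It cannot deliver the within-class bound $G_\Psi \leq G_{\Psi^*}$ when $\Psi^*$ is not the pushforward score, which is exactly what your argument proves.
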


\begin{remark}[Dependence on observation operator]\label{rem:G-depends-O}
The Godambe information $G_\Psi$ depends on the observation operator
$\mathcal O$ through both $S$ and $V$.  Different observation operators
yield different optimal inference functionals and different information
bounds.  The precise relationship between $G_\Psi$ and the Fisher
information of the pushforward model is established in
Section~\ref{sec:convolution}.
\end{remark}

\section{The convolution theorem and the information hierarchy}\label{sec:convolution}

This section establishes the key theoretical result of the paper: the
Godambe optimality criterion is not an arbitrary choice but a structural
consequence of the H\'ajek--Le~Cam convolution theorem applied to the
pushforward model.

\subsection{Local asymptotic normality of the pushforward model}\label{subsec:LAN}

Recall that a parametric family $\{P_\theta^{\mathcal O} : \theta \in \Theta\}$ is \emph{locally asymptotically normal} (LAN) at $\theta_0$ with Fisher information matrix $I_{\mathcal O}(\theta_0)$ if the log-likelihood ratio satisfies
\begin{equation}\label{eq:LAN}
\log \frac{dP_{\theta_0 + h/\sqrt{n}}^{\mathcal O,\otimes n}}
         {dP_{\theta_0}^{\mathcal O,\otimes n}}
= h^\top \Delta_n(\theta_0)
  - \tfrac{1}{2}\, h^\top I_{\mathcal O}(\theta_0)\, h
  + o_{P_{\theta_0}}(1),
\end{equation}
where $\Delta_n(\theta_0) \Rightarrow \mathcal N(0, I_{\mathcal O}(\theta_0))$.

The LAN condition holds whenever the pushforward model admits densities $f_\theta^{\mathcal O}$ satisfying the \emph{differentiability in quadratic mean} (DQM) condition:
\[
\int \left(
\sqrt{f_{\theta_0+h}^{\mathcal O}(y)}
- \sqrt{f_{\theta_0}^{\mathcal O}(y)}
- \tfrac{1}{2}\, h^\top \dot\ell_{\theta_0}^{\mathcal O}(y)\,
  \sqrt{f_{\theta_0}^{\mathcal O}(y)}
\right)^{\!2} dy
= o(\|h\|^2),
\]
where $\dot\ell_{\theta_0}^{\mathcal O}$ is the score of the pushforward model (van der Vaart, 1998, Theorem~7.2).

\begin{remark}[The Fisher information $I_{\mathcal O}$]\label{rem:IO-definition}
The quantity $I_{\mathcal O}(\theta_0)$ appearing in~\eqref{eq:LAN}
is the Fisher information of the \emph{pushforward experiment}
$\{P_\theta^{\mathcal O}\}$, not of the original model.  It is defined as
\[
I_{\mathcal O}(\theta)
= \E_{P_\theta^{\mathcal O}}\!\left[
\dot\ell_\theta^{\mathcal O}(Y)\,
\dot\ell_\theta^{\mathcal O}(Y)^\top
\right],
\]
where $\dot\ell_\theta^{\mathcal O}(y) = \partial_\theta \log
f_\theta^{\mathcal O}(y)$ is the score of the pushforward model.  When the
observation operator is the identity (\ie point evaluation),
$I_{\mathcal O} = I_{\mathrm{classical}}$; for any non-trivial observation
operator, $I_{\mathcal O} \leq I_{\mathrm{classical}}$ by the
data-processing inequality.  The quantity $I_{\mathcal O}$ thus measures
the statistical information about $\theta$ that survives the observation
mechanism.
\end{remark}

\begin{proposition}[LAN for specific observation operators]\label{prop:LAN-examples}
Let $\{P_\theta : \theta \in \Theta\}$ be a parametric family with
densities $f_\theta$ satisfying the classical DQM condition with Fisher
information $I_{\mathrm{classical}}(\theta)$.

\begin{enumerate}[label=(\alph*)]
\item \textbf{Point evaluation} ($\mathcal O = \delta_x$): the pushforward
model equals the original model, and $I_{\mathcal O} = I_{\mathrm{classical}}$.

\item \textbf{Kernel-weighted observation} with $\varphi > 0$: the pushforward
model has density
$f_\theta^{\mathcal O}(x) = f_\theta(x)\,\varphi(x)\,/\,c(\theta)$,
where $c(\theta) = \int f_\theta(x)\,\varphi(x)\,dx$ is the normalising
constant.  In general, $c(\theta)$ depends on~$\theta$, and the pushforward
score is
$\partial_\theta \log f_\theta^{\mathcal O}(x)
= \partial_\theta \log f_\theta(x) - \dot c(\theta)/c(\theta)$,
where $\dot c(\theta) = \partial_\theta c(\theta)$.
The DQM condition is inherited from the original model, and the pushforward
Fisher information is
\begin{equation}\label{eq:I-kernel}
I_\varphi(\theta)
= \int \frac{(\partial_\theta f_\theta(x))^2}{f_\theta(x)}\,
  \frac{\varphi(x)}{c(\theta)}\,dx
  \;-\; \frac{\dot c(\theta)^2}{c(\theta)^2}.
\end{equation}
When $c(\theta)$ does not depend on~$\theta$---for instance when
$\varphi \equiv 1$, or when $\langle T_\theta, \varphi\rangle$ is constant
in~$\theta$---the second term vanishes and the formula simplifies to
$I_\varphi = \int (\partial_\theta f_\theta)^2 / f_\theta \cdot \varphi\,dx$.

The quantity $I_\varphi(\theta)$ is \emph{the Fisher information of the
induced observation law}, not merely a weighted version of the classical
Fisher information.  This distinction is conceptually important:
\emph{the weighting by $\varphi$ does not arise from modifying the
optimality criterion; it arises because the observation operator changes
the probability measure under which data are generated.}  Different
kernels induce different observation laws and hence different Fisher
informations.  The classical Fisher information $I_{\mathrm{classical}}$
is recovered in the limit $\varphi \to 1$ (point evaluation).

\item \textbf{Interval observations} with fixed interval $I$: the
pushforward is a one-dimensional model on $[0,1]$ with
$p_\theta^I = \int_I f_\theta(x)\,dx$.  The DQM condition reduces to
smoothness of $p_\theta^I$ in $\theta$, and the Fisher information is
$I_I(\theta) = (\partial_\theta p_\theta^I)^2 / [p_\theta^I(1 - p_\theta^I)]$.
\end{enumerate}
\end{proposition}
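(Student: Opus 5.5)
The proof treats the three cases separately. Each time I identify the pushforward law explicitly and then verify DQM for it, from the classical DQM of $\{f_\theta\}$ or from smoothness of a finite-dimensional parametrisation. LAN then follows from van der Vaart (1998, Theorem~7.2), and the information is read off as $\E[\dot\ell^{\mathcal O}(\dot\ell^{\mathcal O})^\top]$.

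\textbf{Case (a).} This is immediate. With $\mathcal O=\delta_x$, Section~\ref{subsec:asymp-setup} gives $Y_i=X_i$, so $P_\theta^{\mathcal O}=P_\theta$. DQM and the information are therefore inherited verbatim, and $I_{\mathcal O}=I_{\mathrm{classical}}$.

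\textbf{Case (b).} I take as the defining statement that the observation law is the $\varphi$-tilted law $f_\theta\varphi/c(\theta)$. Positivity of $\varphi$ and $\varphi\in\mathcal S$ make $\varphi$ bounded, so $\varphi\le\|\varphi\|_\infty$.

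First, $c(\theta)=\int f_\theta\varphi$ is differentiable with $\dot c=\int \dot\ell_\theta f_\theta\varphi$. To see this, write
\[
f_{\theta+h}-f_\theta=\bigl(\sqrt{f_{\theta+h}}-\sqrt{f_\theta}\bigr)\bigl(\sqrt{f_{\theta+h}}+\sqrt{f_\theta}\bigr).
\]
Apply Cauchy--Schwarz with the weight $\varphi$, which is bounded, together with the $L^2$ convergence $\sqrt{f_{\theta+h}}\to\sqrt{f_\theta}$ that follows from DQM.

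Second, I show that $\sqrt{f_\theta\varphi}$ is DQM. Its remainder is the classical DQM remainder multiplied by $\sqrt\varphi$. Since $\varphi$ is bounded, this remainder is $o(\|h\|^2)$ in $L^2(dx)$.

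Third, dividing by $\sqrt{c(\theta)}$ is a smooth scalar operation. Since $c(\theta)>0$ and $c$ is differentiable, the product rule keeps DQM. The resulting score is $\dot\ell_\theta-\dot c/c$.

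Finally, expanding $\E_{f_\theta\varphi/c}[(\dot\ell_\theta-\dot c/c)^2]$ gives the cross term $-2(\dot c/c)\E[\dot\ell_\theta]$. Here $\E[\dot\ell_\theta]=\dot c/c$ under the tilted law, so the expansion collapses to \eqref{eq:I-kernel}. When $c$ is constant in $\theta$ we have $\dot c=0$, which gives the simplified form. The remark about $\varphi\to1$ is interpretive and will be handled by monotone or dominated convergence in \eqref{eq:I-kernel} under integrability of $\dot\ell^2 f$.

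\textbf{Case (c).} The pushforward is the Bernoulli variable $\mathbf 1\{X\in I\}$ with success probability $p_\theta^I$. A Bernoulli family $\{\mathrm{Bern}(p(\theta))\}$ is DQM as soon as $p$ is differentiable with $0<p<1$, because the square root is smooth on $(0,1)$ and the sample space is finite. Differentiability of $p_\theta^I=\int_I f_\theta$ follows from the same argument as for $c(\theta)$, using the weight $\mathbf 1_I$. The score is
\[
\dot p\left(\frac{y}{p}-\frac{1-y}{1-p}\right),
\]
whose variance is $\dot p^2/[p(1-p)]$.

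\textbf{Main obstacle.} The step I expect to be hardest is the differentiation of the normaliser $c(\theta)$ in (b) under DQM alone, since no dominated-derivative hypothesis is given. I would first try the Cauchy--Schwarz argument exploiting boundedness of $\varphi$ described above. Interpreting ``kernel-weighted observation'' as this tilted law is a modelling identification, and I would state it explicitly rather than derive it from the operator $\mathcal O_\varphi$.
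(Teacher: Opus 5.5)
Your proposal is correct. The paper states this proposition without proof. It only asserts that DQM is ``inherited'' in (b) and ``reduces to smoothness of $p_\theta^I$'' in (c), so your argument is the justification the paper omits rather than an alternative to it. The one substantive ingredient is the lemma you isolate: under DQM, $\theta\mapsto\int g f_\theta$ is differentiable with derivative $\int g\,\dot\ell_\theta f_\theta$ for every bounded $g$. To see this, expand the factor $\sqrt{f_{\theta+h}}-\sqrt{f_\theta}$ by the DQM expansion itself, not only its $L^2$-continuity, and use $L^2$-continuity of $\sqrt{f_\theta}$ for the other factor. Applied with $g=\varphi$ and $g=\mathbf 1_I$, this handles $c(\theta)$ and $p_\theta^I$ at once. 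The remaining steps are routine: the bounded factor $\sqrt\varphi$ multiplying the DQM remainder, the product rule with $c(\theta)^{-1/2}$, and DQM on a two-point sample space. Your proviso $0<p_\theta^I<1$ genuinely sharpens the paper's ``smoothness'' condition, which is not sufficient where $p_\theta^I\in\{0,1\}$, since the square root need not be differentiable there.

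Two corrections are needed.

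\emph{The simplified form.} Your sentence ``when $c$ is constant \ldots\ which gives the simplified form'' is not right as written. With $c\equiv c_0$, \eqref{eq:I-kernel} reduces to $c_0^{-1}\int(\partial_\theta f_\theta)^2 f_\theta^{-1}\varphi\,dx$. The displayed simplification, which drops the factor $1/c$, therefore holds only when $c_0=1$, for example $\varphi\equiv1$ or under the normalisation $\langle T_\theta,\varphi\rangle=1$. You should state this explicitly rather than reproduce the statement.

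\emph{The tilted-law reading in (b).} Your caution here is well placed. Literally $\mathcal O_\varphi(\delta_X)=\varphi(X)$, whereas $f_\theta\varphi/c(\theta)$ depends on $\theta$ nonlinearly through $c(\theta)$ and is in general not the law of any $\theta$-free transformation of $X$. Consequently nothing in your proof, or in the proposition, yields $I_\varphi\le I_{\mathrm{classical}}$, and the inequality can in fact fail. Take Cauchy location at $\theta=0$ with a Gaussian kernel of standard deviation $2$. Symmetry gives $\dot c(0)=0$, and \eqref{eq:I-kernel} evaluates to about $0.58>1/2=I_{\mathrm{classical}}$. So your argument should not be read as supporting the first inequality of Proposition~\ref{prop:hierarchy} in case (b).
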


\subsection{The distributional convolution theorem}\label{subsec:convolution-thm}

\begin{theorem}[Distributional convolution theorem]\label{thm:convolution}
Let $\{(T_\theta, \varphi) : \theta \in \Theta\}$ be a distributional
statistical model and $\mathcal O : \mathcal S' \to \mathcal Y$ a
measurable observation operator such that the pushforward family
$\{P_\theta^{\mathcal O}\}$ is LAN at $\theta_0$ with Fisher information
$I_{\mathcal O}(\theta_0)$.

Then for any \emph{regular} estimator $\hat\theta_n$ based on observations
$Y_i = \mathcal O(T_{X_i})$, the asymptotic distribution satisfies
\begin{equation}\label{eq:convolution}
\sqrt{n}\,(\hat\theta_n - \theta_0)
\;\xrightarrow{\;\mathcal D\;}\;
\mathcal N\bigl(0,\, I_{\mathcal O}(\theta_0)^{-1}\bigr) * M,
\end{equation}
where $M$ is a probability distribution on $\R^p$ and $*$ denotes
convolution.  In particular, the asymptotic covariance matrix satisfies
\[
\Var_{\mathrm{asymp}}(\hat\theta_n)
\;\geq\;
I_{\mathcal O}(\theta_0)^{-1}
\]
in the L\"owner order.
\end{theorem}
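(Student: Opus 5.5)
The plan is to reduce the statement to the classical Hájek--Le~Cam convolution theorem applied to the pushforward experiment, so that the distributional structure enters only through the construction of $\{P_\theta^{\mathcal O}\}$. First I will check that the data are i.i.d.\ draws from $P_{\theta_0}^{\mathcal O}$ on $(\mathcal Y,\mathcal B_{\mathcal Y})$. Since $\mathcal O$ is measurable from $(\mathcal S',\mathcal C)$ to $(\mathcal Y,\mathcal B_{\mathcal Y})$, the variables $Y_i=\mathcal O(T_{X_i})$ are well-defined random elements. By Definition~\ref{def:pushforward}, their law is $P_\theta^{\mathcal O}$. Any estimator ``based on observations $Y_i$'' is a measurable map $\hat\theta_n=t_n(Y_1,\dots,Y_n)$. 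The statistical experiment seen by the estimator is therefore exactly the product experiment $\{(P_{\theta}^{\mathcal O})^{\otimes n}\}$, and nothing about $T_\theta$ or $\varphi$ beyond this matters.

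Next I will fix the notion of regularity: $\hat\theta_n$ is regular at $\theta_0$ if, for every $h\in\R^p$, $\sqrt n(\hat\theta_n-\theta_0-h/\sqrt n)$ converges under $(P_{\theta_0+h/\sqrt n}^{\mathcal O})^{\otimes n}$ to a limit law $L$ that does not depend on $h$. With the LAN expansion~\eqref{eq:LAN} assumed, the argument follows van der Vaart (1998, Theorem~8.8). The steps are as follows.
\begin{itemize}
\item By Prohorov and the weak convergence $\Delta_n\Rightarrow\mathcal N(0,I_{\mathcal O})$, pass to a subsequence along which $(\sqrt n(\hat\theta_n-\theta_0),\Delta_n)$ converges jointly under $\theta_0$ to some law of $(Z,\Delta)$.
\item By Le~Cam's third lemma, the contiguity of $\theta_0+h/\sqrt n$ to $\theta_0$ (itself a consequence of LAN), and the expansion~\eqref{eq:LAN}, the limit under $\theta_0+h/\sqrt n$ of $\sqrt n(\hat\theta_n-\theta_0)$ is given by $\E[\mathbf 1_B(Z)e^{h^\top\Delta-\frac12 h^\top I_{\mathcal O}h}]$.
\item Regularity forces the law of $Z-h$ under this tilted measure to be $L$ for all $h$. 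Taking characteristic functions and analytically continuing in $h$, one shows that $Z-I_{\mathcal O}^{-1}\Delta$ is independent of $\Delta$.
\end{itemize}
Hence $Z\stackrel{d}{=}I_{\mathcal O}^{-1}\Delta+W$ with $W\sim M$ independent of $I_{\mathcal O}^{-1}\Delta\sim\mathcal N(0,I_{\mathcal O}^{-1})$. This is~\eqref{eq:convolution}.

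The variance bound follows directly. If $M$ has finite second moments, independence gives $\Var(Z)=I_{\mathcal O}^{-1}+\Var(M)\ge I_{\mathcal O}^{-1}$ in the Löwner order. If $M$ has infinite second moments, the inequality holds trivially, reading $\Var_{\mathrm{asymp}}$ as the covariance of the limit law.

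The main obstacle is the independence step, which is where the characteristic-function identity must be continued from real $h$ to imaginary arguments. Rather than reprove it, I would invoke the classical theorem verbatim. The genuine work specific to this paper is then only to make sure that the hypotheses are stated on the pushforward experiment: LAN must be for $P_\theta^{\mathcal O}$, not $P_\theta$, and the estimator must be measurable with respect to $\sigma(Y_1,\dots,Y_n)$. Under these two requirements the classical result applies with $I_{\mathcal O}$ in place of the classical Fisher information.
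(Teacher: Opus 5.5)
Your proposal is correct and takes the same route as the paper: the result is the H\'ajek--Le~Cam convolution theorem applied verbatim to the LAN pushforward experiment $\{(P_\theta^{\mathcal O})^{\otimes n}\}$, with regularity understood exactly as in the paper's sketch. What you add is the content of the cited theorem rather than a different argument: the third-lemma and analytic-continuation step giving $Z = I_{\mathcal O}(\theta_0)^{-1}\Delta + W$ with $W \sim M$ independent of $\Delta$, and the L\"owner bound read off $\Var(Z) = I_{\mathcal O}(\theta_0)^{-1} + \Var(W)$ (with your convention when $M$ has no second moment); your reference to van der Vaart's Theorem~8.8, the convolution theorem itself, is the apt one.
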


\begin{proof}[Proof sketch]
This is a direct application of the H\'ajek--Le~Cam convolution theorem
(H\'ajek, 1970; Le~Cam, 1972; van der Vaart, 1998, Theorem~8.11) to the
LAN family $\{P_\theta^{\mathcal O}\}$.  The regularity of $\hat\theta_n$
means that the limit distribution of
$\sqrt{n}(\hat\theta_n - \theta_0 - h/\sqrt{n})$ under
$P_{\theta_0 + h/\sqrt{n}}^{\mathcal O}$ does not depend on~$h$.  Under
LAN, such estimators must have a limit distribution that is a convolution
with a normal component whose covariance is
$I_{\mathcal O}(\theta_0)^{-1}$.
\end{proof}

\subsection{Regularity of inference-functional estimators}\label{subsec:regularity-bridge}

The convolution theorem applies to all \emph{regular estimator sequences}
in the Le~Cam sense.  The following proposition establishes that estimators
arising from regular inference functionals belong to this class, thereby
bridging the inference-functional framework and the Le~Cam theory.

\begin{proposition}[Regularity in the Le~Cam sense]\label{prop:regularity-bridge}
Let $\hat\theta_n$ be a sequence of roots of the empirical estimating
equation~\eqref{eq:general-estimating} based on a regular inference
functional $\Psi$ satisfying the conditions of
Theorems~\ref{thm:consistency} and~\ref{thm:AN}.  Then $\hat\theta_n$ is
a regular estimator sequence for the pushforward experiment
$\{P_\theta^{\mathcal O}\}$ in the sense that the limit distribution of
$\sqrt{n}(\hat\theta_n - \theta_0 - h/\sqrt{n})$ under
$P_{\theta_0+h/\sqrt{n}}^{\mathcal O,\otimes n}$ does not depend on~$h$.
\end{proposition}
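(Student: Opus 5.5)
The argument is the standard one showing that asymptotically linear estimators are regular in a LAN experiment (van der Vaart, 1998, Lemma~8.14 together with Le~Cam's third lemma), applied to the pushforward family $\{P_\theta^{\mathcal O}\}$.

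\textbf{Step 1: asymptotic linearity.} From the proof of Theorem~\ref{thm:AN}, under $P_{\theta_0}^{\mathcal O,\otimes n}$,
\[
\sqrt n(\hat\theta_n-\theta_0)=S(\theta_0)^{-1}\frac{1}{\sqrt n}\sum_{i=1}^n\Psi(Y_i,\theta_0)+o_P(1).
\]
This follows from consistency (Theorem~\ref{thm:consistency}), the convergence $\partial_\theta\bar\Psi_n(\tilde\theta_n)\to -S(\theta_0)$, and invertibility of $S(\theta_0)$.

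\textbf{Step 2: joint convergence with the log-likelihood ratio.} We assume the pushforward model is LAN at $\theta_0$ with score $\dot\ell^{\mathcal O}_{\theta_0}$, as in Theorem~\ref{thm:convolution}. The log-likelihood ratio is then $h^\top n^{-1/2}\sum_i\dot\ell^{\mathcal O}_{\theta_0}(Y_i)-\tfrac12h^\top I_{\mathcal O}h+o_P(1)$. The multivariate CLT applied to the pair $\bigl(\Psi(Y_i,\theta_0),\dot\ell^{\mathcal O}_{\theta_0}(Y_i)\bigr)$ gives joint asymptotic normality of $\sqrt n(\hat\theta_n-\theta_0)$ and the log-likelihood ratio. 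The covariance between the two components is $S^{-1}\tau h$, where
\[
\tau=\E_{\theta_0}\bigl[\Psi(Y,\theta_0)\,\dot\ell^{\mathcal O}_{\theta_0}(Y)^\top\bigr].
\]
Le~Cam's third lemma then shows that under $P^{\mathcal O,\otimes n}_{\theta_0+h/\sqrt n}$ the limit of $\sqrt n(\hat\theta_n-\theta_0)$ is $\mathcal N\bigl(S^{-1}\tau h,\;S^{-1}VS^{-\top}\bigr)$.

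\textbf{Step 3: the key identity.} Regularity follows once we show $\tau=S(\theta_0)$. In that case $\sqrt n(\hat\theta_n-\theta_0-h/\sqrt n)$ converges to $\mathcal N(0,G_\Psi^{-1})$ for every $h$. To get the identity, differentiate the unbiasedness condition (R2), $\int\Psi(y,\theta)\,dP^{\mathcal O}_\theta(y)=0$, with respect to $\theta$ at $\theta_0$. The derivative acts on both arguments, giving
\[
0=\E\bigl[\partial_\theta\Psi(Y,\theta_0)\bigr]+\E\bigl[\Psi(Y,\theta_0)\,\dot\ell^{\mathcal O}_{\theta_0}(Y)^\top\bigr],
\]
that is, $\tau=S(\theta_0)$.

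\textbf{Main obstacle.} The delicate step is justifying the differentiation in Step~3. Assumption~\ref{ass:diff} and (C5) only cover the derivative in the $\Psi$-argument with the measure $P_{\theta_0}^{\mathcal O}$ held fixed. Differentiating the measure requires DQM together with $\E\|\Psi\|^2<\infty$, which (C4) supplies. I would first handle the measure part: write $P_{\theta_0+h}^{\mathcal O}-P_{\theta_0}^{\mathcal O}$ via $\sqrt{f}$ differences, then use DQM and Cauchy--Schwarz to get
\[
\int\Psi(\cdot,\theta_0)\,d\bigl(P^{\mathcal O}_{\theta_0+h}-P^{\mathcal O}_{\theta_0}\bigr)=h^\top\tau^\top+o(\|h\|).
\]
This step may need $\Psi(\cdot,\theta_0)$ bounded, or a uniform integrability condition, to control the remainder term $\int\Psi\,(\sqrt{f_{\theta_0+h}}-\sqrt{f_{\theta_0}})^2$. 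For the $\theta$-dependence of $\Psi$ near $\theta_0$ under the moving measure, I would use $L^2$-continuity of $\theta\mapsto\Psi(\cdot,\theta)$, which follows from the dominated Lipschitz condition of Remark~\ref{rem:GC-sufficient}. This condition should be stated explicitly as an added hypothesis if it is not implied by the assumptions of Theorems~\ref{thm:consistency} and~\ref{thm:AN}.
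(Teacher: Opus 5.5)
Your proposal is correct and takes essentially the same route as the paper. Both arguments start from the asymptotic linearity $\sqrt n(\hat\theta_n-\theta_0)=S(\theta_0)^{-1}\sqrt n\,\bar\Psi_n(\theta_0)+o_P(1)$, and in both the local alternative enters only as a mean shift $S(\theta_0)h$ that exactly cancels the recentring by $h$. Where the paper merely asserts this shift ``under contiguous alternatives'', you justify it via LAN, Le~Cam's third lemma and the identity $\E[\Psi(Y,\theta_0)\,\dot\ell^{\mathcal O}_{\theta_0}(Y)^\top]=S(\theta_0)$ obtained by differentiating (R2); the extra hypotheses you flag (DQM of the pushforward model, boundedness or uniform integrability of $\Psi$, $L^2$-continuity in $\theta$ under the moving measure) are ones the paper's sketch also relies on implicitly.
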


\begin{proof}
By Theorem~\ref{thm:AN}, the asymptotic linearity representation holds:
$\sqrt{n}(\hat\theta_n - \theta_0)
= S(\theta_0)^{-1}\,\sqrt{n}\,\bar\Psi_n(\theta_0) + o_P(1)$.
The linear map $S(\theta_0)^{-1}$ applied to the score-like quantity
$\sqrt{n}\,\bar\Psi_n(\theta_0)$ satisfies the regularity condition because
the contribution of the local perturbation $h/\sqrt{n}$ enters only through
the centering of $\bar\Psi_n$, which shifts by
$S(\theta_0)\,h/\sqrt{n} + o(n^{-1/2})$ under contiguous alternatives.
The resulting limit distribution is therefore $h$-independent (van der Vaart,
1998, Proposition~8.6).
\end{proof}

\subsection{The Godambe inequality as a corollary}\label{subsec:godambe-corollary}

The convolution theorem, combined with
Proposition~\ref{prop:regularity-bridge}, immediately yields the
relationship between the Godambe information and the Fisher information
of the pushforward model.

\begin{corollary}\label{cor:G-leq-I}
For any regular inference functional $\Psi$ for the pushforward model
$\mathcal P^{\mathcal O}$,
\begin{equation}\label{eq:G-leq-I}
G_\Psi(\theta) \;\leq\; I_{\mathcal O}(\theta)
\end{equation}
in the L\"owner order.  Equality holds if and only if $\Psi$ is
proportional to the score $\dot\ell_\theta^{\mathcal O}$ of the pushforward
model.
\end{corollary}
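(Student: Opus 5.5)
The plan is to derive \eqref{eq:G-leq-I} from the convolution theorem, exactly as the section heading suggests. Then, because the ``if and only if'' clause needs more than a variance bound, I will add a direct covariance (Cauchy--Schwarz) argument that pins down the equality case.

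\textbf{Step 1 (inequality via Le~Cam).} By Theorems~\ref{thm:consistency} and~\ref{thm:AN}, the root $\hat\theta_n$ of \eqref{eq:general-estimating} satisfies $\sqrt n(\hat\theta_n-\theta_0)\Rightarrow\mathcal N(0,G_\Psi(\theta_0)^{-1})$, since $S^{-1}VS^{-\top}=G_\Psi^{-1}$. Proposition~\ref{prop:regularity-bridge} shows that $\hat\theta_n$ is regular for the LAN pushforward experiment. Theorem~\ref{thm:convolution} then gives
\[
\mathcal N(0,G_\Psi^{-1})=\mathcal N(0,I_{\mathcal O}^{-1})*M .
\]
Comparing covariances (a Gaussian limit forces $M$ Gaussian, with covariance $\Sigma_M\ge0$) yields $G_\Psi^{-1}=I_{\mathcal O}^{-1}+\Sigma_M\ge I_{\mathcal O}^{-1}$. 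Since matrix inversion reverses the L\"owner order on positive definite matrices, $G_\Psi\le I_{\mathcal O}$.

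\textbf{Step 2 (direct identity and equality case).} Under DQM, differentiate the unbiasedness identity (R2), $\int\Psi(y,\theta)\,dP^{\mathcal O}_\theta(y)=0$, in $\theta$, using (C5)/Assumption~\ref{ass:diff}. This gives the key identity
\[
S(\theta)=-\E[\partial_\theta\Psi]=\E\bigl[\Psi\,(\dot\ell^{\mathcal O}_\theta)^\top\bigr]=\mathrm{Cov}(\Psi,\dot\ell^{\mathcal O}_\theta).
\]
Consider the joint covariance matrix of $(\Psi,\dot\ell^{\mathcal O}_\theta)$,
\[
\begin{pmatrix} V & S\\ S^\top & I_{\mathcal O}\end{pmatrix}\ge0 .
\]
Taking its Schur complement gives $I_{\mathcal O}-S^\top V^{-1}S\ge0$, which is the inequality again. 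Equality $I_{\mathcal O}=G_\Psi$ means the residual $\dot\ell^{\mathcal O}_\theta-S^\top V^{-1}\Psi$ has zero covariance, hence vanishes $P^{\mathcal O}_\theta$-a.s. That is, $\dot\ell^{\mathcal O}_\theta=A(\theta)\Psi$ with $A=S^\top V^{-1}$ non-singular, so $\Psi$ is proportional to the score. Conversely, if $\Psi=C(\theta)\dot\ell^{\mathcal O}_\theta$, then $S=CI_{\mathcal O}$ and $V=CI_{\mathcal O}C^\top$, and a direct computation gives $G_\Psi=I_{\mathcal O}$.

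\textbf{Main obstacle.} The delicate step is justifying the identity $S=\E[\Psi\,\dot\ell^{\mathcal O\top}]$, i.e.\ differentiating $\theta\mapsto\E_{P_\theta^{\mathcal O}}[\Psi(Y,\theta)]$ when both the integrand and the measure depend on $\theta$. I would split the derivative of $\E_{\theta+h}[\Psi(\cdot,\theta+h)]-\E_\theta[\Psi(\cdot,\theta)]$ into two terms. The first is $\E_{\theta+h}[\Psi(\cdot,\theta+h)-\Psi(\cdot,\theta)]$, handled by Assumption~\ref{ass:diff} and contiguity. The second is $\int\Psi(\cdot,\theta)\,(dP_{\theta+h}-dP_\theta)$, handled by DQM together with $V<\infty$: write $f_{\theta+h}-f_\theta=(\sqrt{f_{\theta+h}}-\sqrt{f_\theta})(\sqrt{f_{\theta+h}}+\sqrt{f_\theta})$ and apply Cauchy--Schwarz, which gives $h^\top\E[\Psi\dot\ell^\top]+o(\|h\|)$. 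If this interchange proves too awkward in full generality, I would state it as an additional regularity assumption implicit in ``regular inference functional'' with (C5). The convolution route in Step~1 remains valid for the inequality either way. The only other caveat is that Step~1 covers only the Gaussian part of $M$, which is why the equality characterisation rests on Step~2.
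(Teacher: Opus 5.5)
Your Step~1 is the paper's proof. Theorem~\ref{thm:AN} gives asymptotic covariance $G_\Psi^{-1}$, Proposition~\ref{prop:regularity-bridge} and Theorem~\ref{thm:convolution} give $G_\Psi^{-1}\geq I_{\mathcal O}^{-1}$, and inversion reverses the order.

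The routes part at the equality clause. The paper argues that equality forces $M=\delta_0$, hence asymptotic efficiency, hence $\Psi\propto\dot\ell^{\mathcal O}_\theta$. That last link is only asserted. Making it rigorous requires two further facts: best regular estimators are exactly those asymptotically linear with influence function $I_{\mathcal O}^{-1}\dot\ell^{\mathcal O}_\theta$, and influence functions are unique. Together these give $S^{-1}\Psi=I_{\mathcal O}^{-1}\dot\ell^{\mathcal O}_\theta$ almost surely.

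Your Step~2 is Godambe's original covariance argument, transplanted to the pushforward law. It delivers the inequality and the almost-sure proportionality in one stroke, pointwise in $\theta$. It uses no consistency, asymptotic normality, LAN expansion or regularity bridge.

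Both routes in fact rest on the same identity. By Le~Cam's third lemma, a root of~\eqref{eq:general-estimating} is regular precisely when $\E[\Psi\,\dot\ell^{\mathcal O\top}_\theta]=S$. This is the shift ``$S(\theta_0)h/\sqrt n$'' asserted in the proof of Proposition~\ref{prop:regularity-bridge}. So for this corollary the convolution theorem adds interpretation rather than logical strength: it shows that $I_{\mathcal O}^{-1}$ bounds every regular estimator, not just roots of estimating equations.

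A minor point: in Step~1, $M$ is automatically Gaussian, since $\hat M(t)=\exp\bigl(-\tfrac12 t^\top(G_\Psi^{-1}-I_{\mathcal O}^{-1})t\bigr)$. What Step~1 lacks is the efficiency-to-influence-function link, not control of a non-Gaussian part of $M$.

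On your main obstacle, the fallback assumption is genuinely needed. Neither condition you cite yields $S=\E[\Psi\,\dot\ell^{\mathcal O\top}_\theta]$. Assumption~\ref{ass:diff} differentiates under the fixed law $P^{\mathcal O}_{\theta_0}$, and (C5) read literally together with (C2) would force $S=0$.

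If you derive the identity from DQM instead, two extra conditions are required:
\begin{itemize}
\item Your Cauchy--Schwarz step needs $\E_{\theta+h}\|\Psi(\cdot,\theta)\|^2$ to stay bounded as $h\to 0$, not merely $V(\theta)<\infty$.
\item The term $\E_{\theta+h}[\Psi(\cdot,\theta+h)-\Psi(\cdot,\theta)]$ needs a dominated (or $L^2$) derivative condition under the moving law. Contiguity does not control expectations.
\end{itemize}
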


\begin{proof}
By Theorem~\ref{thm:AN}, the asymptotic covariance of the inference-functional estimator is $G_\Psi^{-1}$.  By Theorem~\ref{thm:convolution}, this must satisfy $G_\Psi^{-1} \geq I_{\mathcal O}^{-1}$, which is equivalent to $G_\Psi \leq I_{\mathcal O}$.  Equality requires $M = \delta_0$ in~\eqref{eq:convolution}, which holds if and only if $\hat\theta_n$ is asymptotically efficient, \ie $\Psi$ is proportional to the pushforward score.
\end{proof}

\begin{remark}[Resolution of the Godambe justification]\label{rem:godambe-justified}
The Godambe criterion---maximise $G_\Psi$ over the class of regular
inference functionals---is now justified not by an \emph{a priori} choice of
optimality criterion but by a \emph{theorem}: maximising $G_\Psi$ is
equivalent to minimising the non-Gaussian noise $M$ in the convolution
representation~\eqref{eq:convolution}.  The bound $I_{\mathcal O}^{-1}$ is
intrinsic to the observation operator and the model, not to the choice of
inference functional.
\end{remark}

\subsection{The information hierarchy}\label{subsec:hierarchy}

When the original model admits a density and the observation operator is not
the identity, the Fisher information $I_{\mathcal O}$ of the pushforward
model is generally smaller than the classical Fisher information
$I_{\mathrm{classical}}$, because the observation operator discards
information.

\begin{proposition}[Information hierarchy]\label{prop:hierarchy}
Let $\{P_\theta\}$ be LAN with Fisher information
$I_{\mathrm{classical}}(\theta)$.  Let $\mathcal O$ be an observation
operator and $\Psi$ a regular inference functional.  Then
\begin{equation}\label{eq:hierarchy}
I_{\mathrm{classical}}(\theta)
\;\geq\;
I_{\mathcal O}(\theta)
\;\geq\;
G_\Psi(\theta)
\end{equation}
in the L\"owner order.  The first inequality becomes equality when
$\mathcal O$ is sufficient (no information is lost).  The second becomes
equality when $\Psi$ is proportional to the pushforward score.
\end{proposition}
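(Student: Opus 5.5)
The proof splits \eqref{eq:hierarchy} into its two inequalities and treats each separately.

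\emph{Second inequality.} The inequality $I_{\mathcal O}(\theta)\geq G_\Psi(\theta)$ is exactly Corollary~\ref{cor:G-leq-I}, so it needs only to be invoked. It rests on Theorem~\ref{thm:AN}, Proposition~\ref{prop:regularity-bridge} and Theorem~\ref{thm:convolution}. For robustness I would also record a direct proof that avoids the convolution theorem. Differentiating the unbiasedness identity (R2), $\int \Psi(y,\theta)f^{\mathcal O}_\theta(y)\,dy=0$, in $\theta$ using (C5) gives
\[
S(\theta)=\E_{P^{\mathcal O}_\theta}\!\bigl[\Psi\,\dot\ell^{\mathcal O\top}_\theta\bigr].
\]
Then consider the covariance matrix of the stacked vector $(\Psi,\dot\ell^{\mathcal O}_\theta)$, namely
\[
\begin{pmatrix} V & S\\ S^\top & I_{\mathcal O}\end{pmatrix}\geq 0 .
\]
Its Schur complement gives $I_{\mathcal O}-S^\top V^{-1}S\geq 0$. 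Equality holds exactly when the residual of $\dot\ell^{\mathcal O}_\theta$ after projecting onto $\Psi$ vanishes, that is, when $\Psi$ is proportional to the pushforward score.

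\emph{First inequality.} The inequality $I_{\mathrm{classical}}\geq I_{\mathcal O}$ is the data-processing (information-monotonicity) inequality for the statistic $Y=\mathcal O(X)$, already alluded to in Remark~\ref{rem:IO-definition}. I would prove it through the conditional-score identity
\[
\dot\ell^{\mathcal O}_\theta(Y)=\E_\theta\bigl[\dot\ell_\theta(X)\mid Y\bigr],
\]
which follows from DQM of the original family by differentiating $P_\theta(\mathcal O^{-1}(B))=\int \mathbf 1_B(\mathcal O(x))\,dP_\theta$ under the integral. Then write $\dot\ell_\theta=\E[\dot\ell_\theta\mid Y]+(\dot\ell_\theta-\E[\dot\ell_\theta\mid Y])$. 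The two summands are orthogonal in $L^2(P_\theta)$, so
\[
I_{\mathrm{classical}}=I_{\mathcal O}+\E\bigl[\Var(\dot\ell_\theta\mid Y)\bigr]\geq I_{\mathcal O}.
\]
Equality holds iff $\dot\ell_\theta$ is $\sigma(Y)$-measurable. By the factorisation criterion this is the case when $\mathcal O$ is sufficient, which gives the first equality clause.

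\emph{Main obstacle.} The delicate step is justifying the conditional-score identity, and more generally DQM of the pushforward, under the weak hypothesis that $\{P_\theta\}$ is merely LAN. I would first assume DQM of the original model, which implies LAN and suffices. I would then use the standard result (van der Vaart, 1998, the lemma that DQM is preserved under measurable maps) that the pushforward of a DQM family is DQM, with score equal to the conditional expectation of the original score. For the Level~1 linear operators of Section~\ref{subsec:operator-hierarchy} this is routine. If the full identity proves awkward, I would fall back on Le~Cam's comparison: the pushforward experiment is a randomisation of the original one, so its limiting Gaussian experiment $N(h,I_{\mathcal O}^{-1})$ is less informative than $N(h,I_{\mathrm{classical}}^{-1})$, which forces $I_{\mathcal O}\leq I_{\mathrm{classical}}$.
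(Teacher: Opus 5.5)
Your proof is correct, and its backbone is the paper's own. The paper gives no separate argument for Proposition~\ref{prop:hierarchy}; it combines Corollary~\ref{cor:G-leq-I} (which rests on Theorem~\ref{thm:AN}, Proposition~\ref{prop:regularity-bridge} and Theorem~\ref{thm:convolution}) with the data-processing inequality that Remark~\ref{rem:IO-definition} asserts without proof. Your supplementary arguments take a genuinely different and more elementary route.

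Your Schur-complement bound is the classical, non-asymptotic Cauchy--Schwarz argument of Godambe (1960). It needs only (R2), the interchange (C5) and a square-integrable pushforward score. It also bypasses the regularity bridge, whose proof tacitly uses the very identity $S=\E[\Psi\,\dot\ell^{\mathcal O\top}_\theta]$ to obtain the shift $Sh/\sqrt n$. And it gives the equality case at once as $\dot\ell^{\mathcal O}_\theta=S^\top V^{-1}\Psi$ a.s., whereas the convolution route must pass through $M=\delta_0$ and a characterisation of efficient estimators. In return, the paper's route places $G_\Psi^{-1}$ inside the bound $I_{\mathcal O}^{-1}$ that holds for every regular estimator, which is what Remark~\ref{rem:godambe-justified} needs.

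Your conditional-score decomposition likewise replaces the paper's one-line appeal by the identity $I_{\mathrm{classical}}-I_{\mathcal O}=\E[\Var(\dot\ell_\theta\mid Y)]$, an explicit formula for the observation cost. It also makes visible the one structural fact being used: $P^{\mathcal O}_\theta$ is the image of $P_\theta$ under a statistic $Y=\mathcal O(X)$. Keep this in mind, because the reweighted law $f_\theta\varphi/c(\theta)$ of Proposition~\ref{prop:LAN-examples}(b) is not an image law, and the first inequality can fail for it. For $N(\theta,1)$, formula~\eqref{eq:I-kernel} reduces to $I_\varphi(\theta)=\Var_{Q_\theta}(X)$ with $Q_\theta\propto f_\theta\varphi$. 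At $\theta=0$ with $\varphi(x)=e^{-(x^2-25)^2}$ this is close to $25$, against $I_{\mathrm{classical}}=1$.
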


The gap $I_{\mathrm{classical}} - I_{\mathcal O}$ quantifies information
lost through the observation mechanism (the ``observation cost'').  The gap
$I_{\mathcal O} - G_\Psi$ quantifies information lost through the choice
of inference functional (the ``estimation cost'').

\begin{remark}[Geometric interpretation]\label{rem:hierarchy-geometry}
Each element of the hierarchy~\eqref{eq:hierarchy} defines a positive
semi-definite bilinear form on the tangent space $T_\theta\Theta$.  When
positive definite, these forms define Riemannian metrics on $\Theta$:
\[
g^{\mathrm{Fisher}} \;\geq\; g^{\mathcal O} \;\geq\; g^{\mathrm{Godambe}}_\Psi,
\]
where the ordering is in the sense of quadratic forms.  The classical
information geometry of Amari and Rao corresponds to
$g^{\mathrm{Fisher}}$; the metric $g^{\mathcal O}$ is the natural geometry
of the pushforward experiment; and $g^{\mathrm{Godambe}}_\Psi$ is the
metric induced by a specific inference functional.  The geometry of these
metrics and the variational problem of optimising $g^{\mathcal O}$ over the
class of observation operators are developed in a companion
paper~\cite{LabouriauE}.
\end{remark}

\subsection{Computation for the main examples}\label{subsec:hierarchy-examples}

\paragraph{Cauchy location with Gaussian kernel.}
Let $X \sim \mathrm{Cauchy}(\theta)$ and
$\varphi(x) = (2\pi\sigma_\varphi^2)^{-1/2}\exp(-x^2/(2\sigma_\varphi^2))$.
The classical Fisher information is
$I_{\mathrm{classical}}(\theta) = 1/2$.  The $\varphi$-weighted Fisher
information~\eqref{eq:I-kernel} is
\[
I_\varphi(\theta)
= \int \frac{(f_\theta'(x))^2}{f_\theta(x)}\,\varphi(x)\,dx
< I_{\mathrm{classical}},
\]
which can be computed numerically.  The Godambe information of the
sinusoidal inference function $\psi_c(x, \theta) = \sin(c(x-\theta))$ is
\[
G_c(\theta)
= \frac{2c^2\,e^{-2c}}{1 - e^{-4c}},
\]
using the Cauchy characteristic function $\phi(u) = e^{-|u|}$.
This gives a concrete instance of the hierarchy
$I_{\mathrm{classical}} > I_\varphi > G_c$.

\paragraph{Student $t_3$ location with Gaussian kernel.}
For $X \sim t_3(\theta)$, the classical Fisher information is
$I_{\mathrm{classical}} = 3/5$.
The Bessel-function characteristic function gives
$G_c = 2c^2\,g_3(c^2)^2 / (1 - g_3(4c^2))$,
where $g_3(s) = (1 + \sqrt{3s})\,e^{-\sqrt{3s}}$ is the radial generating
function for $\nu = 3$.

\section{Examples}\label{sec:examples}

We illustrate the framework with examples ordered following the
Layer~I$\to$II$\to$III narrative.

\subsection{Interval-censored data}\label{subsec:interval-censored}

One of the conceptual advantages of the present framework is that it
applies naturally when observations are not point values but distributional
objects.  Interval-censored data provide a central illustration.

Suppose the underlying latent variable $Y$ has distribution $P_\theta$,
represented distributionally by $(T_\theta, \varphi)$.  Instead of observing
exact values, one observes only intervals $I_i = [L_i, R_i]$ with
$Y_i \in I_i$.  The observation operator is
$\mathcal O_{I_i}(T_f) = \int_{I_i} f(x)\,dx$, and the pushforward model
assigns probability $p_\theta^I = P_\theta(Y \in I)$ to the event.

\paragraph{Estimating equations.}
Using the sinusoidal inference functional for a location parameter~$\mu$:
\[
\Psi(I, \mu)
= \int_I \sin(c(x - \mu))\,f_\mu(x)\,\varphi(x)\,dx.
\]
The estimating equation $n^{-1}\sum \Psi(I_i, \mu) = 0$ is well defined
even when the density $f_\mu$ is unavailable in closed form, provided the
weak characteristic function is known (since
$\Psi$ can be expressed through Fourier-analytic identities).

\paragraph{Interpretation.}
From the operator-theoretic perspective, interval censoring is not merely
``missing data'' but a change in observation mechanism: the observation
operator $\mathcal O_I$ replaces point evaluation $\delta_x$ by the
set-valued functional $\mathbf{1}_I$.  The information hierarchy
$I_{\mathrm{classical}} \geq I_I \geq G_\Psi$ quantifies the information
cost of censoring separately from the cost of using a specific inference
functional.

\subsection{Sinusoidal inference functions for heavy-tailed models}\label{subsec:sinusoidal}

\paragraph{Student $t$ location.}
Let $X = \theta + Y$ with $Y \sim t_\nu$ symmetric.  The sinusoidal
inference function
\[
\psi_c(x, \theta) = \sin(c(x - \theta)), \qquad c > 0,
\]
is bounded, requires neither a density nor finite moments, and is
unbiased by symmetry:
$\E_\theta[\sin(c(X - \theta))] = \mathrm{Im}(\phi_Y(c)) = 0$.

The empirical characteristic function
$z_n(u) = n^{-1}\sum e^{iuX_j}$ provides the estimating equation through
$\mathrm{Im}(e^{-iu\theta} z_n(u)) = 0$, or equivalently
\[
\frac{1}{n} \sum_{j=1}^n \sin(u(X_j - \theta)) = 0.
\]

If $z_n(u) = |z_n(u)|\,e^{i\alpha_n(u)}$, candidate estimators are
$\hat\theta_k = (\alpha_n(u) + 2k\pi)/u$, $k \in \mathbb Z$, and the
chosen estimator is the value closest to a robust pilot (e.g., the sample
median).

\paragraph{Sensitivity and variability.}
Using the characteristic function of $Y \sim t_\nu(0,1)$:
\begin{align*}
S_c(\theta) &= c\,g_\nu(c^2), \\
V_c(\theta) &= \tfrac{1}{2}(1 - g_\nu(4c^2)),
\end{align*}
where $g_\nu(s) = (\sqrt{\nu s})^{\nu/2}\,K_{\nu/2}(\sqrt{\nu s})\,
/\,[2^{\nu/2-1}\,\Gamma(\nu/2)]$ is the radial generating function and
$K_{\nu/2}$ is the modified Bessel function.  The Godambe information is
\[
G_c = \frac{2c^2\,g_\nu(c^2)^2}{1 - g_\nu(4c^2)}.
\]

\paragraph{Cauchy distribution.}
For $\nu = 1$ (Cauchy), $\phi_Y(u) = e^{-|u|}$, so $g_1(s) = e^{-\sqrt{s}}$
and
$G_c = 2c^2 e^{-2c}/(1 - e^{-4c})$.
The classical Fisher information is $I_{\mathrm{classical}} = 1/2$.  For
$c \approx 0.56$, the sinusoidal estimator achieves approximately 65\% of
the Fisher efficiency, while requiring no density evaluation.

\subsection{Weak moment-based inference}\label{subsec:weak-moment}

Given the kernel $\varphi$, the weak moments
${}^{(\varphi)}m_k(\theta) = \langle T_\theta, x^k\varphi \rangle$ are
defined for all $k \geq 0$.  The corresponding inference functions are
\[
\psi_k(x, \theta) = x^k\,\varphi(x) - {}^{(\varphi)}m_k(\theta),
\qquad k = 1, \ldots, K.
\]
The kernel factor $\varphi(x)$ is essential: it ensures consistency with the
distributional pairing
$\langle T_\theta, \psi_k(\cdot, \theta) \rangle
= \langle T_\theta, x^k\varphi \rangle - {}^{(\varphi)}m_k(\theta) = 0$.

For point observations, the empirical weak moment is
$n^{-1}\sum X_i^k\,\varphi(X_i)$, the kernel-weighted sample moment.

\begin{remark}[M-determinacy and kernel dependence]\label{rem:M-det}
For the weak moments to identify $\theta$, the weak moment problem must be
determinate.  By Theorem~5.1 of~\cite{LabouriauA1}, this is guaranteed for
positive Schwartz kernels with exponential tails (Carleman condition),
for kernels with exponential-type decay (Denjoy--Carleman), and for
Gaussian kernels (Hermite completeness).
\end{remark}

\subsection{Gaussian location (classical benchmark)}\label{subsec:gaussian}

For $X \sim \mathcal N(\theta, 1)$, the score function is
$\psi(x, \theta) = x - \theta$, yielding the sample mean.  The Godambe
information equals the Fisher information:
$G_\psi = I_{\mathrm{classical}} = 1$.  This confirms that the framework
recovers classical likelihood inference as a special case.

\subsection{Finite mixtures}\label{subsec:mixtures}

Consider a mixture model
$P_\theta = \pi P_1 + (1 - \pi) P_2$.  Although a density exists, the
score function may fail the unbiasedness condition, reflecting the
non-regular nature of mixture models.  Transform-based inference functions
bypass this difficulty, since the characteristic function of the mixture is
$\phi_\theta(u) = \pi\phi_1(u) + (1-\pi)\phi_2(u)$, which is well defined
and smooth.  A detailed analysis of inference functions in mixture models is
given in~\cite{Labouriau2022}.

\subsection{Simulation study}\label{subsec:simulation}

We compare three estimators in the model $X_i \sim t_\nu(\theta)$:
the sample mean, the sample median, and the sinusoidal (weak phase)
estimator.  We use $n = 100$ and $2000$ replications.

\begin{table}[htbp]
\centering
\caption{Monte Carlo results for the Student $t$ location model.}
\label{tab:t-sim}
\begin{tabular}{llrrrr}
\hline
$\nu$ & Estimator & Bias & Variance & MSE & MAD \\
\hline
1.5 & Mean   &  0.0122 & 2.4232 & 2.4221 & 0.2801 \\
1.5 & Median & -0.0043 & 0.0215 & 0.0215 & 0.0968 \\
1.5 & Weak   & -0.0038 & 0.0393 & 0.0393 & 0.1321 \\
\hline
3 & Mean   & -0.0017 & 0.0283 & 0.0283 & 0.1073 \\
3 & Median &  0.0005 & 0.0187 & 0.0186 & 0.0931 \\
3 & Weak   & -0.0018 & 0.0210 & 0.0210 & 0.0959 \\
\hline
\end{tabular}
\end{table}

The sample mean becomes unstable under heavy tails ($\nu = 1.5$); the weak
estimator remains stable and competitive with the median; performance
depends on the tuning parameter~$u$.

\begin{figure}[htbp]
\centering
\includegraphics[width=0.45\textwidth]{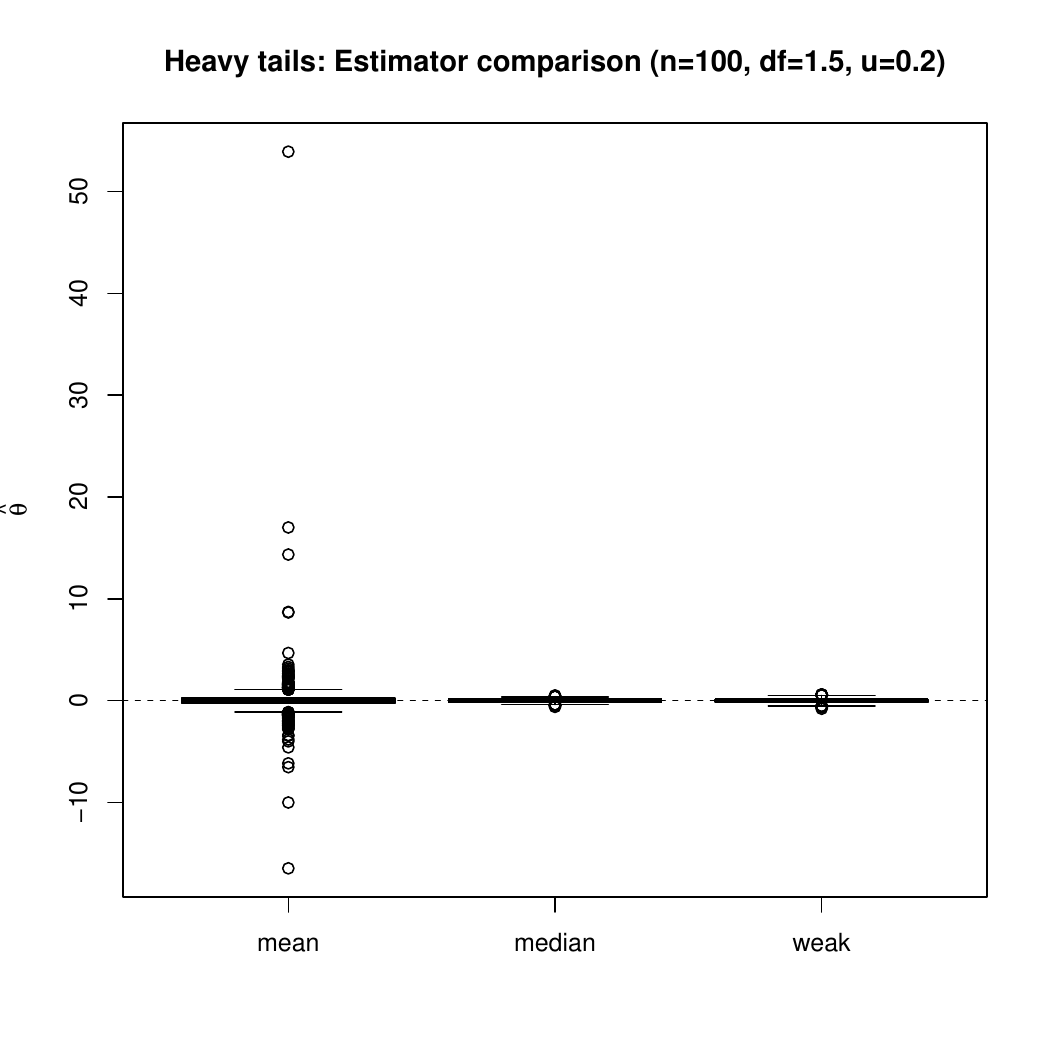}
\includegraphics[width=0.45\textwidth]{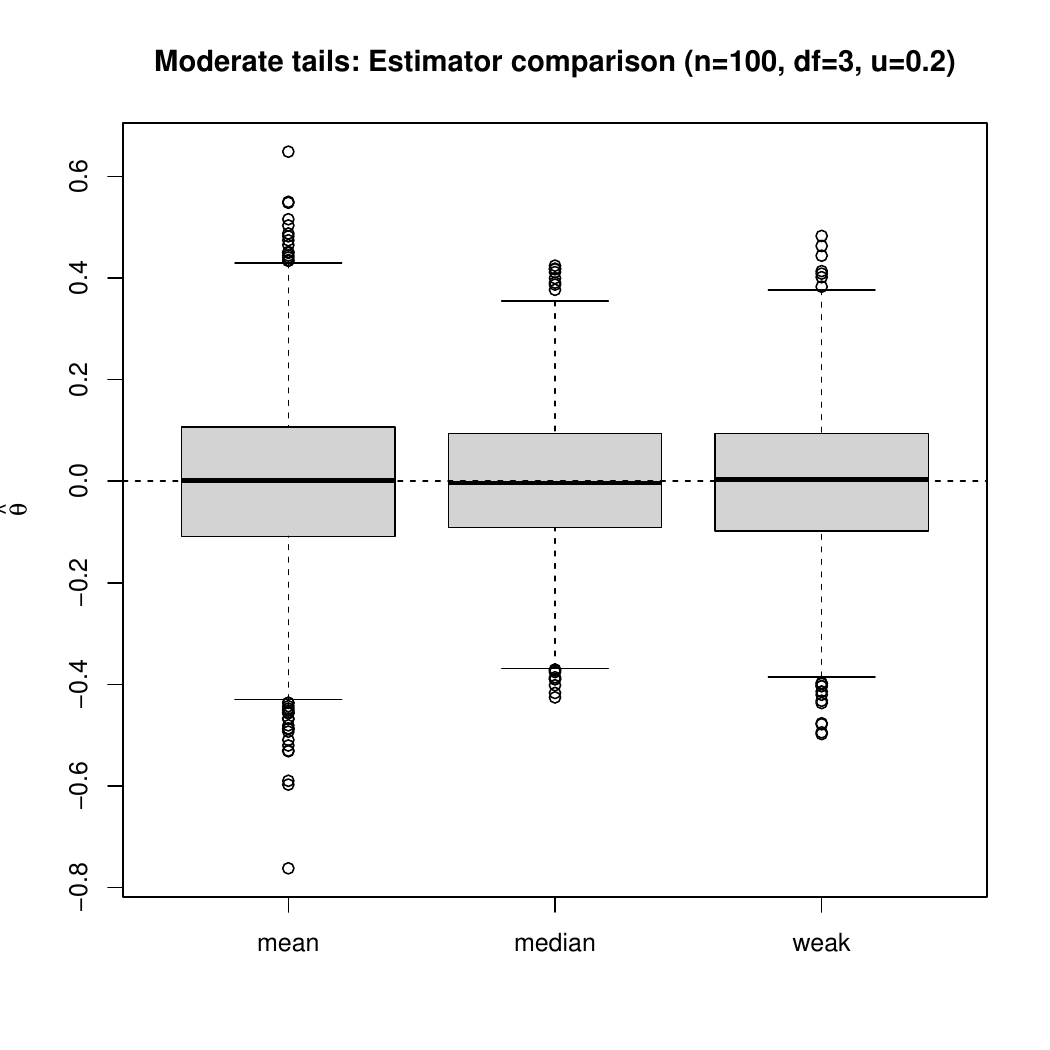}
\caption{Estimator comparison for Student $t$ models.}
\end{figure}

\section{Multivariate extension}\label{sec:multivariate}

\subsection{Multivariate distributional models and inference functions}

Let $\mathcal X \subseteq \R^k$ and
$\mathcal P = \{P_\theta : \theta \in \Theta \subseteq \R^p\}$.  Each
$P_\theta$ admits a distributional representation
$(T_\theta, \varphi)$ with $T_\theta \in \mathcal S'(\R^k)$ and
$\varphi \in \mathcal S(\R^k)$.  A vector-valued inference function
$\psi : \mathcal X \times \Theta \to \R^p$ defines the estimating equation
$n^{-1}\sum \psi(X_i, \theta) = 0$.

The sensitivity matrix is
$S(\theta) = -\E_{P_\theta}[\partial_\theta \psi(X, \theta)]
\in \R^{p \times p}$,
the variability matrix is
$V(\theta) = \E_{P_\theta}[\psi\,\psi^\top] \in \R^{p \times p}$,
and the Godambe information is
$G(\theta) = S^\top V^{-1} S$.

\begin{theorem}[Multivariate asymptotic normality]\label{thm:multi-AN}
Under the multivariate analogues of the conditions in
Theorem~\ref{thm:AN},
\[
\sqrt{n}\,(\hat\theta_n - \theta_0)
\;\Rightarrow\;
\mathcal N\!\left(0,\; S^{-1} V S^{-\top}\right).
\]
\end{theorem}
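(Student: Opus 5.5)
The argument repeats the proof of Theorem~\ref{thm:AN}, now written out for vector-valued $\psi$ and $p\times p$ matrices. The one place that needs care is that the mean-value theorem does not hold for vector-valued maps. First, consistency: under the multivariate Glivenko--Cantelli, identification and continuity conditions, Theorem~\ref{thm:consistency} (whose proof is already dimension-free) gives $\hat\theta_n \to \theta_0$ in probability.

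Second, expand $\bar\psi_n(\theta) = n^{-1}\sum_i \psi(X_i,\theta)$ around $\theta_0$. I will not use a single intermediate point $\tilde\theta_n$, since this fails for $\R^p$-valued maps. Instead I will use the integral form
\[
0 = \bar\psi_n(\hat\theta_n) = \bar\psi_n(\theta_0) + A_n(\hat\theta_n-\theta_0),
\qquad
A_n = \int_0^1 \partial_\theta \bar\psi_n\bigl(\theta_0 + t(\hat\theta_n-\theta_0)\bigr)\,dt .
\]
This is valid on the event that $\hat\theta_n$ lies in a convex neighbourhood of $\theta_0$ on which Assumption~\ref{ass:diff} holds, and that event has probability tending to one.

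Third, handle the two terms.
\begin{itemize}
\item The multivariate CLT applies to the i.i.d.\ vectors $\psi(X_i,\theta_0)$. They have mean zero by unbiasedness (R2) and finite covariance $V(\theta_0)$. Hence $\sqrt n\,\bar\psi_n(\theta_0) \Rightarrow \mathcal N(0,V)$.
\item For the matrix $A_n$, I need $A_n \to -S(\theta_0)$ in probability. I would derive this from a uniform LLN for $\partial_\theta\psi$ over a shrinking neighbourhood: $\sup_{\|\theta-\theta_0\|\le\delta}\|\partial_\theta\bar\psi_n(\theta) - \E\,\partial_\theta\psi(X,\theta)\| \to 0$. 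That in turn follows from a dominated envelope for $\partial_\theta\psi$ together with continuity of $\theta\mapsto \E\,\partial_\theta\psi(X,\theta)$ at $\theta_0$. Combining this with consistency gives $A_n \to -S(\theta_0)$ entrywise.
\end{itemize}
This uniform-convergence step is the main obstacle. The one-dimensional proof of Theorem~\ref{thm:AN} passed over it by invoking ``the LLN'' at a random point. In the multivariate statement I would make the requirement explicit as part of ``the multivariate analogues of the conditions'': either a dominated Lipschitz bound on $\partial_\theta\psi$, or a locally integrable envelope, as in Remark~\ref{rem:GC-sufficient}. For the sinusoidal and bounded examples such an envelope is immediate.

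Finally, $S$ is non-singular, so $A_n$ is invertible with probability tending to one. On that event $\sqrt n(\hat\theta_n-\theta_0) = -A_n^{-1}\sqrt n\,\bar\psi_n(\theta_0)$. By Slutsky and the continuous mapping theorem (matrix inversion is continuous at $S$), this converges in distribution to $S^{-1}\mathcal N(0,V) = \mathcal N(0,S^{-1}VS^{-\top})$. As a by-product, the argument also yields the asymptotic linear representation used in Proposition~\ref{prop:regularity-bridge}, now in $\R^p$.
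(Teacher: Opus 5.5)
Your proposal is correct and is essentially the paper's argument. The paper gives no separate proof and relies on the proof of Theorem~\ref{thm:AN} (linearise at $\theta_0$, CLT for $\sqrt n\,\bar\psi_n(\theta_0)$, LLN for the Jacobian, Slutsky), which transfers directly because the data dimension $k$ plays no role. Note that Theorem~\ref{thm:AN} was already stated for $\Theta\subseteq\R^p$, so the single-intermediate-point defect you repair with the integral form is already present there rather than new here.

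Your explicit local uniform LLN for $\partial_\theta\psi$ is the right way to justify $A_n\to -S(\theta_0)$, with one small caveat. The integrable envelope must be paired with a.e.\ continuity of $\theta\mapsto\partial_\theta\psi(x,\theta)$, as in (C1) or Remark~\ref{rem:GC-sufficient}. Continuity of $\theta\mapsto\E\,\partial_\theta\psi(X,\theta)$ alone does not give the uniform convergence.
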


\subsection{Elliptically contoured distributions}\label{subsec:elliptical}

Let $X \in \R^k$ have an elliptically contoured distribution with
location~$\mu \in \R^k$, scatter matrix~$\Sigma \in \R^{k \times k}$
(positive definite), and characteristic function
$\phi_X(u) = e^{iu^\top\mu}\,\psi_{\mathrm{ec}}(u^\top\Sigma u)$
for a scalar generator $\psi_{\mathrm{ec}} : [0,\infty) \to \R$.
Let $a \in \R^k$ be a fixed direction; the scalar parameter of interest is
$\alpha = a^\top\mu$ (a linear functional of the location).  The
sinusoidal inference function
$\psi_v(x, \mu) = \sin(v^\top(x - \mu))$, $v \in \R^k$, is unbiased by
symmetry and bounded.

\paragraph{Gaussian benchmark.}
For the Gaussian case $\psi_{\mathrm{ec}}(s) = e^{-s/2}$, writing
$q = v^\top\Sigma v$, the sensitivity is
$S_v = (v^\top a)\,e^{-q/2}$ and the variability is
$V_v = \frac{1}{2}(1 - e^{-2q})$.
The Godambe information for $\alpha = a^\top\mu$ is therefore
\[
G_v
= \frac{S_v^2}{V_v}
= \frac{2(v^\top a)^2\,e^{-q}}{1 - e^{-2q}}.
\]
The classical Fisher information for~$\alpha$ is
$I_F = a^\top\Sigma^{-1}a$, and the asymptotic relative efficiency is
\[
\mathrm{ARE}(v)
= \frac{G_v}{I_F}
= \frac{2(v^\top a)^2\,e^{-q}}{(1 - e^{-2q})\,a^\top\Sigma^{-1}a}.
\]
With the optimal choice $v = c\,\Sigma^{-1}a$, so that
$q = c^2\,a^\top\Sigma^{-1}a = c^2 I_F$ and $v^\top a = c\,I_F$, this
simplifies to
$\mathrm{ARE}(c) = 2c^2 I_F\,e^{-c^2 I_F}/(1 - e^{-2c^2 I_F})$.
As $c \to 0$, $\mathrm{ARE}(c) \to 1$: the sinusoidal estimator
approaches full Fisher efficiency in the small-frequency limit.

\subsection{Multivariate Student $t$}\label{subsec:mult-t}

For $X \sim t_\nu(\mu, \Sigma)$ with characteristic function
$\phi_X(u) = e^{iu^\top\mu}\,g_\nu(u^\top\Sigma u)$,
where
\[
g_\nu(s)
= \frac{(\sqrt{\nu s})^{\nu/2}\,K_{\nu/2}(\sqrt{\nu s})}
       {2^{\nu/2-1}\,\Gamma(\nu/2)},
\]
the sinusoidal inference function $\psi_v(x, \mu) = \sin(v^\top(x-\mu))$
gives sensitivity $S_v = (v^\top e_\mu)\,g_\nu(v^\top\Sigma v)$ and
variability $V_v = (1 - g_\nu(4v^\top\Sigma v))/2$.

For joint inference on $(\mu, \Sigma)$ with $\nu$ known, one uses $m$
evaluation points $u_1, \ldots, u_m$ and the complex-valued inference
functions
$\psi_j(x; \mu, \Sigma) = e^{iu_j^\top x} - e^{iu_j^\top\mu}\,g_\nu(u_j^\top\Sigma u_j)$.
This provides $2m$ real equations for $p = k + k(k+1)/2$ parameters.
The optimal GMM weight $W = V^{-1}$ maximises the Godambe information.

The characteristic-function approach requires only evaluation of $K_{\nu/2}$
at $m$ points---no density evaluation or Fourier inversion---making it
computationally attractive in high dimensions.

\section{Nuisance parameters and inferential separation}\label{sec:nuisance}

\subsection{Setup}\label{subsec:nuisance-setup}

Consider $\mathcal P = \{P_{\alpha,\beta} : \alpha \in A, \beta \in B\}$
where $\alpha \in A \subseteq \R^k$ is the parameter of interest and
$\beta \in B$ is the nuisance parameter.  An inference function for
$\alpha$ is $\psi : \mathcal X \times A \to \R^k$ that depends on $x$ and
$\alpha$ but not on~$\beta$.

The unbiasedness condition is
\[
\E_{\alpha,\beta}[\psi(X, \alpha)] = 0
\qquad \text{for all } (\alpha, \beta) \in A \times B.
\]
This must hold uniformly over~$\beta$, which is more demanding than the
condition without nuisance parameters.

\subsection{Sensitivity, variability, and Godambe information}

Since $\psi$ does not depend on $\beta$, the sensitivity and variability
depend on the full parameter $(\alpha, \beta)$:
\begin{align*}
S_\psi(\alpha, \beta) &= -\E_{\alpha,\beta}[\partial_\alpha \psi(X, \alpha)], \\
V_\psi(\alpha, \beta) &= \E_{\alpha,\beta}[\psi(X, \alpha)\,\psi(X, \alpha)^\top].
\end{align*}
The Godambe information is
$J_\psi(\alpha, \beta) = S_\psi^\top V_\psi^{-1} S_\psi$,
and the asymptotic variance depends on $\beta$ through both $S$ and $V$.

\subsection{Orthogonality and the Godambe--Thompson projection}\label{subsec:orthogonality}

\subsubsection*{Nuisance tangent space: two-stage construction}

The semiparametric geometry of nuisance parameters involves two distinct
stages.  In the first (\emph{distributional}) stage, one differentiates the
distributional model with respect to the nuisance parameter; in the second
(\emph{Hilbert-space}) stage, one works in the $L^2$ space of the
pushforward model, where projections and orthogonality are well defined.

\paragraph{Stage~1 (distributional).}
When $P_{\alpha,\beta}$ admits a density, the classical nuisance tangent
space is the closed linear span
\begin{equation}\label{eq:nuisance-tangent}
\mathcal T_N(\alpha, \beta)
= \overline{\operatorname{span}}\!
\left\{
\partial_{\beta_j} \log p(\cdot\,; \alpha, \beta)
: j = 1, \ldots, \dim(\beta)
\right\}
\subset L^2_0(P_{\alpha,\beta}).
\end{equation}

In the distributional setting, densities may not exist, so one replaces
the nuisance score by distributional derivatives.  For each nuisance
direction $e_j$, the \emph{nuisance directional derivative}
$\dot T_j \in \mathcal S'(\R)$ satisfies
\[
\langle \dot T_j, g\varphi \rangle
= \lim_{\varepsilon \to 0}
\frac{\langle T_{\alpha,\beta+\varepsilon e_j} - T_{\alpha,\beta},\,
g\varphi \rangle}{\varepsilon}
\]
for all $g \in \mathcal A_\varphi$.  This derivative lives in
$\mathcal S'(\R)$ and is defined purely through the distributional pairing.

\paragraph{Stage~2 (Hilbert-space).}
The semiparametric geometry---projections, orthogonality, efficiency
bounds---operates in the Hilbert space $L^2_0(P_{\alpha,\beta}^{\mathcal O})$
of the \emph{pushforward model}.  When the pushforward model admits a
density, each distributional nuisance derivative $\dot T_j$ induces an
$L^2_0$-Riesz representative $h_j$ through the identity
$\E_{\alpha,\beta}[g(X)\,h_j(X)]
= \langle \dot T_j, g\varphi \rangle$ for all $g \in \mathcal A_\varphi$.
The distributional nuisance tangent space is then
\begin{equation}\label{eq:nuisance-tangent-dist}
\mathcal T_N(\alpha, \beta)
= \overline{\operatorname{span}}\!\{h_j : j = 1, \ldots, \dim(\beta)\}
\;\subset\; L^2_0(P_{\alpha,\beta}^{\mathcal O}),
\end{equation}
where the closure is in $L^2_0(P_{\alpha,\beta}^{\mathcal O})$.

This two-stage construction is important: the distributional derivatives
(Stage~1) define the nuisance directions without requiring densities, while
the Hilbert-space structure (Stage~2) provides the inner product needed for
the Bhapkar--Godambe projection below.  When both stages are available
(\ie the pushforward model has a density), the construction
agrees with~\eqref{eq:nuisance-tangent}.

\subsubsection*{Automatic orthogonality}

\begin{proposition}[Distributional orthogonality]\label{prop:orthogonality}
Let $\psi : \mathcal X \times A \to \R^k$ be a regular inference function
satisfying the nuisance-uniform unbiasedness condition.  Then each
component $\psi_i(\cdot, \alpha) \in \mathcal T_N^\perp(\alpha, \beta)$.
\end{proposition}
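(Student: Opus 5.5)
The plan is the classical argument that differentiating an identity in the nuisance parameter produces orthogonality, with the classical score replaced by the distributional nuisance derivative $\dot T_j$ and its Riesz representative $h_j$ from Stage~2. Fix $\alpha$ and a nuisance direction $e_j$. Since the unbiasedness condition holds for all $\beta$, the map $\varepsilon \mapsto \E_{\alpha,\beta+\varepsilon e_j}[\psi_i(X,\alpha)]$ vanishes identically near $\varepsilon=0$. In the distributional formulation this reads $\langle T_{\alpha,\beta+\varepsilon e_j}, \psi_i(\cdot,\alpha)\varphi\rangle = 0$ for all small $\varepsilon$. Regularity (R1) gives $\psi_i(\cdot,\alpha) \in \mathcal A_\varphi$. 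Because $\psi$ does not depend on $\beta$, the test function $\psi_i(\cdot,\alpha)\varphi$ is fixed as $\varepsilon$ varies, so we may take $g = \psi_i(\cdot,\alpha)$ in the definition of $\dot T_j$. The difference quotient is identically zero, and therefore
\[
\langle \dot T_j, \psi_i(\cdot,\alpha)\varphi\rangle = 0 .
\]

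Next we transfer this to the Hilbert space. By the defining identity of the Riesz representative, $\E_{\alpha,\beta}[\psi_i(X,\alpha)\,h_j(X)] = \langle \dot T_j, \psi_i(\cdot,\alpha)\varphi\rangle = 0$, that is, $\psi_i \perp h_j$ in $L^2(P^{\mathcal O}_{\alpha,\beta})$. Unbiasedness at $(\alpha,\beta)$ gives $\psi_i$ mean zero. Finite variability (C4) gives $\psi_i \in L^2_0(P^{\mathcal O}_{\alpha,\beta})$, so orthogonality in $L^2_0$ makes sense. Orthogonality to each generator extends by linearity to $\operatorname{span}\{h_j\}$. It then extends to the closure in \eqref{eq:nuisance-tangent-dist}, because $h \mapsto \E[\psi_i h]$ is a continuous linear functional on $L^2_0$ by Cauchy--Schwarz. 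Hence $\psi_i(\cdot,\alpha) \in \mathcal T_N^\perp(\alpha,\beta)$.

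The main obstacle is the compatibility between the two stages. The derivative $\dot T_j$ is only guaranteed to exist, and $h_j$ to represent it, on the class where the defining identities hold. We must check that $g=\psi_i(\cdot,\alpha)$ is legitimately in that class: the limit defining $\dot T_j$ must be taken for this particular $g$, and the Riesz identity must hold for it. This is precisely what (R1) guarantees, so I would state explicitly that $g\in\mathcal A_\varphi$. I would also note that the expectation in the unbiasedness condition is interpreted through the pairing, so the Stage~2 inner product uses the same measure. In the point-observation case this means identifying $\E_{\alpha,\beta}[g h_j]$ with the $L^2_0(P^{\mathcal O}_{\alpha,\beta})$ inner product, as in Stage~2. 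If the representative $h_j$ fails to exist, the argument still yields the weaker distributional statement $\langle \dot T_j, \psi_i\varphi\rangle = 0$, and I would record that as the density-free form of the orthogonality.
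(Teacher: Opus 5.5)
Your proposal is correct and is the paper's argument: differentiate the nuisance-uniform unbiasedness identity along $e_j$ to get orthogonality to each nuisance direction, then pass to the closed span by linearity and continuity. The difference is that the paper runs it with the classical nuisance scores $\partial_{\beta_j}\log p(\cdot\,;\alpha,\beta)$, tacitly assuming a density and differentiation under the integral sign, whereas you run it with $\dot T_j$ and the Riesz representatives $h_j$ of \eqref{eq:nuisance-tangent-dist}. Your version fits the distributional tangent space the proposition is named for, and it replaces the interchange step by the trivial limit of an identically zero difference quotient plus the defining Riesz identity. It also makes explicit what the paper leaves implicit: that $\psi_i \in L^2_0$, the continuity needed for the closure step, and the need to read unbiasedness through the pairing $\langle T_{\alpha,\beta}, \psi_i\varphi\rangle = 0$ so that both stages refer to the same measure.
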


\begin{proof}
Since $\psi$ does not depend on $\beta$, differentiating the unbiasedness
condition with respect to $\beta_j$ gives
$\E_{\alpha,\beta}[\psi_i(X, \alpha)\,\partial_{\beta_j} \log p(X; \alpha, \beta)] = 0$.
By linearity and closure,
$\psi_i(\cdot, \alpha) \in \mathcal T_N^\perp$.
\end{proof}

\subsubsection*{Bhapkar--Godambe projection}

When the model admits a density $p(\cdot\,; \alpha, \beta)$, denote by
$U_\alpha(x) = \partial_\alpha \log p(x; \alpha, \beta) \in \R^k$
and $U_\beta(x) = \partial_\beta \log p(x; \alpha, \beta) \in \R^q$
the partial scores for $\alpha$ and $\beta$ respectively.
Given a quasi-inference function $\Phi(x; \alpha, \beta)$ (\eg the
partial score $U_\alpha$), the adjusted inference function is
\[
\psi^*(x, \alpha)
= \Phi(x; \alpha, \beta)
- \E[\Phi\,U_\beta^\top]\,(\E[U_\beta\,U_\beta^\top])^{-1}\,U_\beta(x).
\]
This is the $L^2_0(P_{\alpha,\beta})$-projection of $\Phi$ onto the
orthogonal complement $\mathcal T_N^\perp$ of the nuisance tangent space.
In the distributional setting, the projection takes the same form with
expectations interpreted as distributional pairings and $U_\beta$ replaced
by the Riesz representatives $h_j$
of~\eqref{eq:nuisance-tangent-dist}.

\subsection{Optimality with nuisance parameters}\label{subsec:nuisance-optimal}

\begin{theorem}[Optimal inference function]\label{thm:optimal-nuisance}
Among all regular inference functions $\psi$ satisfying the
nuisance-uniform unbiasedness condition,
$J_\psi(\alpha, \beta) \leq J_{\psi^*}(\alpha, \beta)$
in the L\"owner order, where $\psi^*$ is the efficient score projected
onto $\mathcal T_N^\perp$.  Explicitly, denoting by
$\Pi_{\mathcal T_N} : L^2_0(P_{\alpha,\beta}^{\mathcal O}) \to
\mathcal T_N$ the orthogonal projection onto the nuisance tangent space,
the optimal inference function is
\[
U^*_\alpha(x; \alpha, \beta)
= U_\alpha(x; \alpha, \beta)
- \Pi_{\mathcal T_N}[U_\alpha](x).
\]
Equality $J_\psi = J_{\psi^*}$ holds if and only if $\psi$ is
equivalent to $\psi^*$.
\end{theorem}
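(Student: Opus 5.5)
The plan is the standard Hilbert-space argument behind the Godambe inequality, carried out in $L^2_0(P_{\alpha,\beta}^{\mathcal O})$. It rests on two inputs: Proposition~\ref{prop:orthogonality}, and an identity expressing the sensitivity as a covariance with the score.

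\textbf{Step 1: sensitivity as a covariance.} Fix $(\alpha,\beta)$ and a regular $\psi$ that is unbiased uniformly in the nuisance parameter. Differentiate $\E_{\alpha,\beta}[\psi(X,\alpha)]=0$ with respect to $\alpha$, using the interchange condition (C5). Because both $\psi$ and the density depend on $\alpha$, this gives
\[
-\E[\partial_\alpha\psi] \;=\; \E[\psi\,U_\alpha^\top],
\qquad\text{so}\qquad S_\psi=\E[\psi\,U_\alpha^\top].
\]
In the distributional case the expectations are read as pairings and $U_\alpha$ is the Riesz representative of the $\alpha$-derivative, exactly as in Stage~2 for the $h_j$.

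\textbf{Step 2: replace $U_\alpha$ by the efficient score.} By Proposition~\ref{prop:orthogonality}, every component of $\psi$ lies in $\mathcal T_N^\perp$. Hence $\E[\psi\,(\Pi_{\mathcal T_N}U_\alpha)^\top]=0$, and therefore $S_\psi=\E[\psi\,U_\alpha^{*\top}]$. Applying the same identity to $\psi^*=U^*_\alpha$ gives $S_{\psi^*}=\E[U^*_\alpha U_\alpha^{*\top}]=V_{\psi^*}$. Consequently $J_{\psi^*}=V_{\psi^*}$, which is the efficient information.

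\textbf{Step 3: Cauchy--Schwarz in the Löwner order.} Consider the joint covariance of the stacked vector $(\psi, U^*_\alpha)$:
\[
\begin{pmatrix} V_\psi & S_\psi\\ S_\psi^\top & J_{\psi^*}\end{pmatrix}\succeq 0 .
\]
Taking the Schur complement gives $J_{\psi^*}-S_\psi^\top V_\psi^{-1}S_\psi\succeq 0$, that is, $J_\psi\le J_{\psi^*}$. The equality case comes from the same computation. The Schur complement is exactly the covariance of the residual $U^*_\alpha - S_\psi^\top V_\psi^{-1}\psi$. If that residual has zero covariance, then $U^*_\alpha=S_\psi^\top V_\psi^{-1}\psi$ almost surely, so $\psi=C\,U^*_\alpha$ with $C$ nonsingular, since $S_\psi$ is nonsingular. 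Conversely, any such $\psi$ gives the same $J$, because $J$ is invariant under $\psi\mapsto C\psi$.

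\textbf{Main obstacle.} The hard part is that $\psi^*$ itself must be an admissible competitor, which Steps 1--3 take for granted. It must be free of $\beta$ and unbiased uniformly in $\beta$. In general $U^*_\alpha$ depends on $\beta$, so the statement is really a bound evaluated at the true $\beta$. The optimum is attained within the class only when the efficient score does not involve $\beta$, as in the cases where the paper invokes the Bhapkar--Godambe projection. I would therefore state the theorem as the bound $J_\psi\le J_{\psi^*}$, which Steps 1--3 prove, with attainment under that additional condition. A second technical point is Step 1 in the density-free setting. To make sense of $\E[\psi\,U_\alpha^\top]$ there, I would require the $\alpha$-directional derivative of $T_{\alpha,\beta}$ to have an $L^2_0$ Riesz representative, mirroring the construction in~\eqref{eq:nuisance-tangent-dist}.
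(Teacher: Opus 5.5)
The paper states this theorem without proof. It appears after Proposition~\ref{prop:orthogonality} and the Bhapkar--Godambe projection, followed only by Remark~\ref{rem:existence}, which defers the existence question to Labouriau (1996). Your Steps 1--3 supply the standard Hilbert-space argument the paper implicitly relies on, and they are correct:
\begin{itemize}
\item $S_\psi=\E[\psi\,U_\alpha^{*\top}]$ follows from differentiating unbiasedness in $\alpha$ and using Proposition~\ref{prop:orthogonality} to remove $\Pi_{\mathcal T_N}U_\alpha$.
\item The Schur complement of the second-moment matrix of $(\psi,U^*_\alpha)$ gives the bound.
\item The residual argument settles the equality case, using nonsingularity of $S_\psi$.
\end{itemize}
Your caveat is also right: $\psi^*$ generally depends on $\beta$, so the theorem is a bound that need not be attained within the class. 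This is exactly what Remark~\ref{rem:existence} concedes.

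Three refinements follow.

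\textbf{Your argument is more robust than your last paragraph suggests.} Step 3 uses $U^*_\alpha$ only as an element of $L^2_0(P^{\mathcal O}_{\alpha,\beta})$ that is orthogonal to $\mathcal T_N$ by construction. Hence $J_\psi\le\E[U^*_\alpha U_\alpha^{*\top}]$ holds for every competitor, whether or not $\psi^*$ is admissible. Identifying the right-hand side with $J_{\psi^*}$ needs only that $U^*_\alpha(\cdot;\alpha',\beta)$ has mean zero under $P_{\alpha',\beta}$ with $\beta$ frozen, which is automatic for scores. It does not need Proposition~\ref{prop:orthogonality}.

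\textbf{Your attainment condition is too strict.} Requiring that ``the efficient score does not involve $\beta$'' ignores that equality is only up to equivalence. Take $N(\mu,\sigma^2)$ with $\alpha=\mu$: here $U^*_\mu=(x-\mu)/\sigma^2$ depends on $\sigma$, yet $\psi=x-\mu$ is $\beta$-free, unbiased for every $\sigma$, and attains the bound. The correct condition is that some $\beta$-free, nuisance-uniformly unbiased $\psi$ equals $C\,U^*_\alpha(\cdot;\alpha,\beta)$ almost surely for a nonsingular $C$. Also, the Bhapkar--Godambe adjusted function of Section~\ref{subsec:orthogonality} generally depends on $\beta$, through $U_\beta$ and the projection coefficients. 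So it is not by itself an instance of attainment, contrary to your remark.

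\textbf{Step 1 needs the product-rule interchange.} You need $\alpha'\mapsto\E_{\alpha',\beta}[\psi(X,\alpha')]$ to be differentiable with derivative $\E[\partial_\alpha\psi]+\E[\psi\,U_\alpha^\top]$. State this as an explicit hypothesis rather than citing (C5). As printed, (C5) together with (C2) would force $S_\psi=0$.
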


\begin{remark}[Existence]\label{rem:existence}
An optimal regular inference function does not always exist.  The
semiparametric bound is attained if and only if $\mathcal T_N^\perp$
contains a function equivalent to the efficient score (Labouriau, 1996,
Theorem~8).
\end{remark}

\subsection{Example: location-scale models}\label{subsec:locscale}

Let $X_i = \mu + \sigma Z_i$ with $Z_i \sim F_0$ symmetric.  The
sinusoidal inference function $\psi_c(x, \mu) = \sin(c(x - \mu))$ depends
only on $\mu$ and is automatically unbiased for all $\sigma > 0$
(since $\varphi_Z$ is real when $F_0$ is symmetric).  No Bhapkar--Godambe
adjustment is needed.

The Godambe information is
\[
J_{\psi_c}(\mu, \sigma)
= \frac{2c^2\,\varphi_Z(c\sigma)^2}{1 - \varphi_Z(2c\sigma)}.
\]

\begin{example}[Gaussian location-scale]\label{ex:gauss-locscale}
For $F_0 = N(0,1)$, $\varphi_Z(t) = e^{-t^2/2}$, and
$\mathrm{ARE}(u) = 2u\,e^{-u}/(1 - e^{-2u}) = u/\sinh(u)$ with
$u = c^2\sigma^2$.  The function $u/\sinh(u)$ is monotonically decreasing
from~$1$ (at $u = 0$) to~$0$ (as $u \to \infty$), so
$\mathrm{ARE} \to 1$ as $c \to 0$.  There is no interior optimum in the
Gaussian case: the sinusoidal estimator approaches full Fisher efficiency
in the small-frequency limit but at the cost of decreasing effective
signal (both $S_c$ and $V_c$ tend to zero as $c \to 0$).  For moderate
frequencies, \eg $u = 1$, the ARE is approximately~$0.85$.
\end{example}

\begin{example}[Student $t$ location-scale]\label{ex:t-locscale}
For the $t_\nu$ distribution, $\varphi_Z$ involves $K_{\nu/2}$, and the
Godambe information can be computed in closed form via Bessel functions.
For heavy-tailed distributions ($\nu$ small), the efficiency of the
sinusoidal estimator relative to MLE can be surprisingly high.
\end{example}

\subsection{Connection to semiparametric theory}\label{subsec:semiparametric}

When $\beta$ is infinite-dimensional, the framework becomes semiparametric.
The distributional nuisance tangent space
$\mathcal T_N$~\eqref{eq:nuisance-tangent-dist} is the closed linear span
of all nuisance-score directions in $L^2_0(P_{\alpha,\beta}^{\mathcal O})$.
Labouriau (1996) showed that the semiparametric Cram\'er--Rao bound
coincides with the inference-function bound if and only if
$\mathcal T_N^\perp = \mathcal F_{IA}$, where
$\mathcal F_{IA} = \bigcap_\beta \mathcal T_N^\perp(\alpha, \beta)$
denotes the \emph{admissible inference-function space}---the intersection
of all nuisance-orthogonal complements as $\beta$ varies.

The distributional framework provides a natural language for semiparametric
extensions: the two-stage construction of
Section~\ref{subsec:orthogonality} allows nuisance tangent directions to
be defined purely through distributional variation of $T_{\alpha,\beta}$
with respect to $\beta$ (Stage~1), even when densities are unavailable.
The Hilbert-space geometry needed for projections and efficiency bounds
is then provided by the pushforward model (Stage~2).

\begin{remark}[Current status]\label{rem:semiparametric-status}
The semiparametric theory in this section is developed at the level of a
mathematically plausible framework: the key structural ideas (two-stage
tangent space, Bhapkar--Godambe projection, orthogonality conditions) are
in place, and they agree with the classical theory when both apply.  A
fully rigorous semiparametric theory---with explicit functional-analytic
conditions, regularity of the tangent space as a function of the nuisance
parameter, and pathwise differentiability in the sense of
Bickel~\emph{et al.}~(1993)---requires additional technical development
and is the subject of ongoing work.
\end{remark}

\subsection{Inferential separation}\label{subsec:nonformation}

The classical notions of S-nonformation, I-nonformation, and L-nonformation
(Barndorff-Nielsen, 1978; J{\o}rgensen and Labouriau, 2012) admit
distributional formulations.

\begin{definition}[Weak S-nonformation]\label{def:weak-S}
The model is \emph{weakly S-nonformative} with respect to $\alpha$ if, for
every $g \in \mathcal S(\R)$,
$\E_\theta[g(X) | U]$ does not depend on $\alpha$ for all~$\beta$.
\end{definition}

\begin{definition}[Weak I-nonformation]\label{def:weak-I}
The model is \emph{weakly I-nonformative} with respect to $\alpha$ if the
conditional characteristic functions
$\{\E_\theta[e^{iuX} | V = v] : \alpha(\theta) = \alpha_0\}$
determine a saturated family.
\end{definition}

\begin{proposition}[Inference-function characterisation]\label{prop:IF-ancillarity}
$V$ is S-ancillary for $\alpha$ if and only if
$\E_\theta[\psi(X, \theta) | V] = 0$ for all $\theta$ and all regular
inference functions~$\psi$.
\end{proposition}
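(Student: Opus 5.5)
The plan is to read ``S-ancillary for $\alpha$'' in the sense of Barndorff-Nielsen (1978) and J{\o}rgensen and Labouriau (2012). That is, the parameter is written $\theta = (\alpha, \beta)$, the marginal law $P^V_\theta$ of $V$ depends on $\theta$ only through $\beta$, and the conditional law of $X$ given $V=v$ depends only on $\alpha$; so $(\alpha, \beta)$ is a cut. Conditional expectations are read through the distributional pairing, so that $\E_\theta[g(X)\mid V]$ is defined for every $g\in\mathcal S(\R)$ as in Definition~\ref{def:weak-S}. Throughout, ``regular inference function'' means Definition~\ref{def:rif} together with the nuisance-uniform unbiasedness of Section~\ref{subsec:nuisance-setup}, so $\psi=\psi(x,\alpha)$.

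\emph{Forward direction.} Disintegrate the unbiasedness condition along $V$:
\[
0=\E_{\alpha,\beta}[\psi(X,\alpha)]=\int \E_{\alpha}[\psi(X,\alpha)\mid V=v]\,dP^V_\beta(v)
\qquad \text{for all } \beta .
\]
By the cut property, the integrand $m_\alpha(v):=\E_\alpha[\psi\mid V=v]$ is free of $\beta$. Hence, for fixed $\alpha$, $m_\alpha$ is a $\beta$-free function with mean zero under every $P^V_\beta$. To conclude $m_\alpha=0$ almost surely I need completeness of $\{P^V_\beta:\beta\in B\}$. I expect this to be the main obstacle, because the statement as written does not assume it. I would first try to absorb completeness into the definition of S-ancillarity, as is implicit in Barndorff-Nielsen's treatment, or else add it as an explicit hypothesis. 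Failing either, the fallback is to note that the identity holds for the subclass of $\psi$ obtained by conditionally centring, namely $\psi=g(X)-\E_\alpha[g(X)\mid V]$ with $g\in\mathcal S$. That subclass is the class the characterisation is really about.

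\emph{Converse.} Assume $\E_\theta[\psi\mid V]=0$ for all $\theta$ and all regular $\psi$.
\begin{enumerate}[label=(\roman*)]
\item \emph{Conditional law is free of $\beta$.} Differentiate $\E_{\alpha,\beta}[\psi\mid V]=0$ with respect to $\beta_j$, using (C5) in its conditional form, which is justified by continuity of the pairing for $\psi\varphi\in\mathcal S$. This shows that every regular $\psi$ is orthogonal, conditionally on $V$, to the conditional nuisance derivative $\partial_{\beta_j}\log p(x\mid v;\alpha,\beta)$, or to its Riesz representative in the sense of Stage~2 of Section~\ref{subsec:orthogonality}.
\item \emph{Richness of the regular class.} I would show that the regular inference functions are dense, conditionally on $V$, among centred functions. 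The natural route is Proposition~\ref{prop:orthogonality} combined with the projection construction: the functions $g(X)-\E_{\alpha,\beta}[g\mid V]$ are used only as auxiliary directions. Given density, the conditional nuisance derivative vanishes, so the conditional law depends on $\alpha$ alone.
\item \emph{Marginal law of $V$ is free of $\alpha$.} Apply the hypothesis to the conditional $\alpha$-score, which is regular by (i). Conditional unbiasedness then leaves no room for the $\alpha$-score of $V$. Equivalently, the $\alpha$-direction of the full score coincides with the conditional score, which forces $\partial_\alpha\log p^V=0$. This yields the cut, and hence S-ancillarity.
\end{enumerate}

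Step (ii) of the converse is the second delicate point, since it needs the regular class to be rich enough to detect the conditional nuisance directions. I would state it explicitly as a density assumption if it cannot be derived from $\mathcal A_\varphi\supseteq\mathcal S$.
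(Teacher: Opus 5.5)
The paper states this proposition without proof, and as written it cannot be proved, so the real question is whether your patches are sound.

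\textbf{Forward direction.} Your worry is justified and the extra hypothesis is unavoidable. Let $V=(V_1,V_2)$ and $X=(V_1,V_2,W)$ with $V_1\sim N(\beta,1)$, $V_2\sim N(0,1)$ and $W\sim N(\alpha,1)$ independent. Then $V$ is S-ancillary for $\alpha$. The function $\psi=W-\alpha+V_2$ is unbiased for all $(\alpha,\beta)$, with sensitivity $1$ and variability $2$, yet $\E[\psi\mid V]=V_2\neq 0$. Completeness of $\{P^V_\beta:\beta\in B\}$ must therefore be added as an explicit hypothesis; it cannot be absorbed into S-ancillarity, since Barndorff-Nielsen's definition does not contain it. With that hypothesis, your disintegration argument is correct. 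Your reading $\psi=\psi(x,\alpha)$ is also forced. For regular functions of the full $\theta$, the $\beta$-component of the full score equals $\partial_\beta\log p^V(V;\beta)$, which is not conditionally centred. Your fallback, restricting to $g(X)-\E_\alpha[g(X)\mid V]$, only makes the claim tautological.

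\textbf{Converse.} The converse is false, and this is exactly where your steps (ii) and (iii) break.

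\emph{A constant $V$ already refutes it.} If $V$ is constant, the right-hand side reduces to unbiasedness, so it holds for every regular $\psi$. But a constant $V$ is S-ancillary only if $P_{\alpha,\beta}$ does not depend on $\beta$. For instance, take $N(\alpha,\beta^2)$ with the regular function $\psi=x-\alpha$.

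\emph{Step (ii) is circular.} When the conditional law given $V$ depends on $\beta$, a function centred conditionally at one $\beta$ is not centred at another. So $\beta$-free regular functions cannot be conditionally dense among centred functions. The density you propose to assume holds only when the conclusion of (i)--(ii) is already true.

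\emph{Step (iii) does not follow.} Applying the hypothesis to the conditional $\alpha$-score yields nothing, since that score is conditionally centred by construction. The hypothesis cannot detect how $P^V$ depends on $\alpha$. Take $\alpha>0$, $V\sim N(\beta,\alpha)$ and $W\mid V\sim N(\alpha,1)$.
\begin{itemize}
\item By completeness of $\{N(\beta,\alpha):\beta\in\R\}$, every $\beta$-free unbiased $\psi$ satisfies $\E[\psi\mid V]=0$.
\item The conditional law of $X$ given $V$ depends on $\alpha$ alone.
\item The marginal of $V$ determines $\alpha$, so no variation-independent factorisation exists and $V$ is not S-ancillary.
\end{itemize}

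What can actually be proved is one-sided: if $V$ is S-ancillary for $\alpha$ and $\{P^V_\beta\}$ is complete, then every regular $\psi(\cdot,\alpha)$ is conditionally unbiased given $V$.
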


The sinusoidal inference function in symmetric location-scale models
achieves inferential separation automatically: it is orthogonal to the
nuisance tangent space without projection, reflecting the underlying
G-nonformation of the model.

\begin{remark}[Hierarchy of nonformation concepts]\label{rem:nonformation-hierarchy}
The classical hierarchy (B-sufficiency $\Rightarrow$ S-sufficiency
$\Rightarrow$ L-sufficiency; G-sufficiency $\Rightarrow$ L-sufficiency)
extends to the distributional setting.  The weak versions preserve this
hierarchy when classical versions are defined, and extend it to models
without densities.
\end{remark}

\section{Discussion}\label{sec:discussion}

This paper has developed a framework for statistical inference in which
observations are modelled by operators acting on distributional
representations of statistical models.  The main results are the
formulation of inference functionals composed with observation operators,
the asymptotic theory for regular inference functionals under mild
conditions, the information hierarchy
$I_{\mathrm{classical}} \geq I_{\mathcal O} \geq G_\Psi$ derived via
the convolution theorem, the extension to nuisance parameters and
inferential separation, and concrete examples in heavy-tailed and
censored-data settings.

The framework rests on the distributional foundation developed
in~\cite{LabouriauA1}, where distribution--kernel pairs, weak moments and
cumulants, determinacy theory, and a distributional central limit theorem
are established.  Weak moment estimation as a concrete inferential
methodology, with applications to robust estimation and generalised
method-of-moments, is developed in~\cite{LabouriauA2}.  The present paper
complements these works by providing a general operator-theoretic
framework---observation operators, inference functionals, pushforward
experiments, and the information hierarchy---within which the
methodological tools of~\cite{LabouriauA2} and the probabilistic
foundations of~\cite{LabouriauA1} can be placed in a unified theoretical
context.

Several directions for future work arise naturally from the
observation-operator formulation.  We describe those that seem most
immediate.

The first concerns \emph{random observation operators}.  In many
experimental settings, different observations pass through different
instruments with different response profiles.  This can be modelled by
allowing the observation operator to vary randomly:
$Y_i = \langle T, \varphi_{\xi_i} \rangle + \varepsilon_i$, where $\xi_i$
represents the state of the instrument.  Such a formulation connects
naturally with mixed models, since random effects in a generalised linear
mixed model may be reinterpreted as random perturbations of the
observation operator.  A concrete instance is the Behrens--Fisher problem,
in which two samples with different variances may be viewed not as
intrinsic heteroscedasticity but as observations obtained through
instruments with different resolutions.

A second direction involves \emph{operator-valued likelihood analogues}.
When observations are genuinely distributional---interval-valued,
convolutionally blurred, or transform-based---the classical likelihood
$\prod p_\theta(X_i)$ may be replaced by information functionals of the
observation operators.  For interval observations, this reduces to the
familiar interval-censoring likelihood $\prod \int_{I_i} f_\theta$; for
transform observations, spectral likelihood analogues provide a natural
alternative.  The theory of such operator-valued information functionals
remains to be developed.

Third, \emph{weak empirical processes} arise when the empirical
distributional object $\mathbb W_n$ and the fluctuation process
$\mathbb G_n = \sqrt{n}(\mathbb W_n - \E[\mathcal O])$ are viewed as
processes indexed by test functions, operator classes, or transform
families.  A Donsker-type theory for such processes---including weak
convergence of $\mathbb G_n$ to a Gaussian limit, tightness criteria for
distribution-valued processes, and entropy conditions for operator
classes---would extend the classical empirical process theory to the
distributional setting.  This requires specifying a topology on
$\mathcal S'$ for convergence, establishing a covariance structure for
the Gaussian limit, and identifying the appropriate function classes.
These are substantial open problems.

A fourth direction concerns \emph{conditional projection and mixed
models}.  Marginalisation over random effects in a mixed model is an
instance of applying an observation operator: the marginalisation operator
$\mathcal M_B$ integrates out the latent component and, in the density
case, reduces to $f^Y(y) = \int f^{Y,B}(y,b)\,db$.  Instead of
constructing the marginal likelihood, one may build inference functionals
directly from the projected distributional representation, using the
Bhapkar--Godambe projection to ensure insensitivity to the nuisance
component.

Fifth, the information hierarchy
$I_{\mathrm{classical}} \geq I_{\mathcal O} \geq G_\Psi$ defines a chain
of positive-definite bilinear forms on the tangent space
$T_\theta\Theta$, hence a chain of \emph{Riemannian metrics}.  The
classical Fisher metric corresponds to the full model; the metric
$g^{\mathcal O}$ captures the geometry of the pushforward experiment; and
$g^{\mathrm{Godambe}}_\Psi$ is the metric induced by a specific inference
functional.  Natural questions include: which observation operator
maximises $I_{\mathcal O}$ within a given class of feasible operators (an
optimal design problem), how the projected Godambe information relates to
the full metric via Riemannian submersion, and what role, if any, is
played by analogues of Amari's $\alpha$-geometry.  These geometric aspects
are developed in~\cite{LabouriauE} and ~\cite{LabouriauZ} .

Finally, the observation-operator formulation suggests a connection with
\emph{statistical inverse problems}.  In inverse problems, the quantity of
interest is observed only through a smoothing or otherwise ill-conditioned
operator; in the present framework, the observation operator plays an
analogous role, specifying the resolution at which the latent distributional
object is probed.  The inference functional can then be viewed as an
estimating procedure applied to a regularised image of the latent object,
and the observation cost $I_{\mathrm{classical}} - I_{\mathcal O}$
measures the price of regularisation.  Developing this connection
systematically may lead to adaptive kernel choices and scale-dependent
inference procedures.

Beyond these specific directions, the broader aim is to develop
distributional statistical models into a coherent framework encompassing
probabilistic foundations, inferential methodology, operator-theoretic and
geometric structure, and computational tools.  The most pressing open
problems, in our view, are the development of weak empirical process
theory, a rigorous semiparametric theory for distributional nuisance
parameters (cf.\ Remark~\ref{rem:semiparametric-status}), and the
extension of the asymptotic theory to stochastic observation operators.



\end{document}